\documentclass[a4paper]{article}
\usepackage{latexsym,amsmath,amsthm,amssymb,fullpage}
\usepackage{color}
\usepackage{ulem}
\usepackage{authblk}
\usepackage{hyperref}

\usepackage[latin1]{inputenc}
\newtheorem{theorem}{Theorem}

\newtheorem{corollary}[theorem]{Corollary}

 \newcommand{\Tau}{\mathrm{\mathcal{T}}}

\theoremstyle{remark} \newtheorem{remark}[theorem]{Remark}



\def\be{\begin{equation}}
\def\en{\end{equation}}
\def\bee{\begin{eqnarray*}}
\def\ene{\end{eqnarray*}}

\usepackage{color}

\def\R{{\mathbb{R}}}

\def\Var{{\rm Var}}

\def\Hess{{\rm Hess}}

\begin{document}   

\title {Quantitative logarithmic Sobolev inequalities \\ and stability estimates}
\author{M. Fathi\thanks{Universit\'e Pierre et Marie Curie, Paris, France, max.fathi@etu.upmc.fr.} , E. Indrei\thanks{Carnegie Mellon University, Pittsburgh, USA, egi@cmu.edu. E. Indrei was supported by US NSF Grants OISE-0967140\\ \indent \indent (PIRE), DMS-0405343, and DMS-0635983 administered by the Center for Nonlinear Analysis in Pittsburgh, PA.} , and M. Ledoux\thanks{University of Toulouse, Toulouse, France, and Institut Universitaire de France, ledoux@math.univ-toulouse.fr.
}}

\date{}
\makeatletter
\def\blfootnote{\xdef\@thefnmark{}\@footnotetext}
\makeatother
\maketitle

%

\begin{abstract}
We establish an improved form of the classical logarithmic Sobolev inequality for the Gaussian
measure restricted to probability densities which satisfy a Poincar\'e inequality.
The result implies a lower bound
on the deficit in terms of the quadratic Kantorovich-Wasserstein distance. We similarly investigate
the deficit in the Talagrand quadratic transportation cost inequality this time by means of an
${\rm L}^1$-Kantorovich-Wasserstein distance, optimal for product measures, and deduce a lower bound on the deficit in the logarithmic Sobolev inequality in terms of this metric. Applications are given in the context
of the Bakry-\'Emery theory and the coherent state transform. The proofs combine tools from
semigroup and heat kernel theory and optimal mass transportation.
\end{abstract}

\bigskip

\section{Introduction and main results}

The classical logarithmic Sobolev inequality of L.~Gross \cite {g75} for the
standard Gaussian measure
$$
 d\gamma (x) \, = \, d \gamma^n (x) \, = \,  e^{-|x|^2/2} \, \frac {dx}{(2\pi)^{n/2}}
$$ 
on the Borel sets of $\R^n$ (cf.~e.g.~\cite {v03,v09,bgl14})
states that if $d\nu = f d\gamma$ is a probability measure with
density $f$ with respect to $\gamma$,
\be \label {eq.lsi}
{\rm H}(\nu) \, \leq \, \frac {1}{2} \, {\rm I} (\nu)
\en 
where
$$
{\rm H}(\nu)  \, = \, {\rm H} \big(\nu \, | \, \gamma ) \, = \, 
\int_{\R^n} f \log f  \, d \gamma  
$$
is the relative entropy of $\nu$ with respect to $\gamma$ and
$$
{\rm I}(\nu)  \, = \, {\rm I} \big(\nu \, | \, \gamma ) \, = \, 
\int_{\R^n} \frac {|\nabla f|^2}{f} \,  d \gamma 
$$
is the Fisher information of $\nu$ with respect to $\gamma$. 

Logarithmic Sobolev inequalities (LSI) are a useful tool in analysis and probability
in the study of convergence to equilibrium, large deviations, and measure concentration.
They are also equivalent to hypercontractivity for their associated semigroup
(cf.~\cite {v03,v09,bgl14}). To ensure that the various terms of the LSI are well-defined,
some smoothness and positivity properties of the density $f$ of $\nu$ have to be considered.
These may be handled by approximation and regularization (see e.g.~\cite {bgl14}).
When dealing with entropy
${\rm H}(\nu)$ and Fisher information ${\rm I}(\nu)$
(and below the LSI deficit $\delta_{\rm LSI}(\nu) $ \eqref{eq.lsideficit}), it will be
usually implicitly
understood that they are well-defined (and finite) for suitable density functions $f$.

The constant $1/2$ in the Gaussian LSI \eqref {eq.lsi}
is known to be optimal, and it was first shown in \cite{c91b} that the cases of equality
are exactly the measures of the form
\be \label {eq.extremal}
d\gamma_b(x) \, = \,  e^{b \cdot x - \frac{|b|^2}{2}} \, d\gamma(x), \quad b \in \R^n.
\en
In other words, the extremal densities $f$ are exponential functions.
(Note that $b$ is the barycenter of $\gamma_b$,
so that in particular the only centered extremal measure is $\gamma $ itself.)

However, the study of the {\it logarithmic Sobolev deficit}
\be \label {eq.lsideficit}
\delta_{\rm LSI} (\nu) \, = \,  \frac {1}{2} \, {\rm I} (\nu) - {\rm H}(\nu)
\en
to quantify proximity with the extremal measures is still largely open in spite
of recent developments for classical Sobolev and related isoperimetric inequalities.
In the broader context of stability results for functional inequalities,
when looking at a functional inequality with known optimal constants and optimizers,
a natural question is indeed whether functions that are close to achieving the optimum are
close to some optimizer. The task is to bound from below the deficit by
some functional that measures how far we are from some optimizer
(typically, a distance). Examples of such results are the recent quantitative stability estimates for Sobolev \cite{cfmp09, fmp13}, Brunn-Minkowski \cite{fmp09, fj13}, and isoperimetric inequalities \cite{fmp08, fmp10, fi13, in14}.

\medskip

The first main result of this note is to propose a (strict) strengthening of the Gaussian LSI
\eqref {eq.lsi} within a subclass of probability measures $\nu$
which in turn produces a lower bound on the deficit $ \delta_{\rm LSI}(\nu)$.
Denote by ${\cal P}(\lambda)$ the class of probability measures
$\nu $ on the Borel sets of $\R^n$
satisfying a Poincar\'e inequality with constant $\lambda >0$
in the sense that for every smooth
$g : \R^n \to \R$ such that $\int_{\R^n} g  d\nu = 0$,
\be \label {eq.poincare}
\lambda \int_{\R^n} g^2 d\nu  \, \leq \,  \int_{\R^n} |\nabla g| ^2 d\nu.
\en
Note that under such a Poincar\'e inequality, the measure $\nu$ necessarily has
a second moment.

\begin {theorem} \label {thm.improvedlsi}
For any centered ($\int_{\R^n} x d\nu = 0$) probability measure $d\nu = f d\gamma$
in the class ${\cal P} (\lambda)$,
$$ 
{\rm H}(\nu)   \, \leq \, \frac {c(\lambda)}{2} \, {\rm I}(\nu),
$$
where
$$
c(\lambda) \, = \, \frac{1 - \lambda  + \lambda \log \lambda}{(1- \lambda )^2} \, < \, 1
   \qquad  \big ( c (1) \, = \, {\textstyle \frac {1}{2}} \big) .
$$
\end {theorem}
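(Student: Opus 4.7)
The plan is to run the Bakry--\'Emery semigroup proof of the Gaussian logarithmic Sobolev inequality along the Ornstein--Uhlenbeck flow and to sharpen it at each time using the Poincar\'e hypothesis. Denote by $P_t$ the Ornstein--Uhlenbeck semigroup on $(\R^n,\gamma)$, and set $f_t = P_t f$, $d\nu_t = f_t\, d\gamma$. The de Bruijn identity $\frac{d}{dt}{\rm H}(\nu_t) = -{\rm I}(\nu_t)$ together with ${\rm H}(\nu_t) \to 0$ yields
\begin{equation*}
{\rm H}(\nu) \, = \, \int_0^\infty {\rm I}(\nu_t)\, dt,
\end{equation*}
so it suffices to establish an improved decay estimate for ${\rm I}(\nu_t)$ under the Poincar\'e assumption.

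The first step is to transport the hypotheses along the flow. By Mehler's formula $\nu_t$ is the convolution of the dilation $(e^{-t}\cdot)_{\#}\nu$, of Poincar\'e constant $\lambda e^{2t}$, with $\mathcal{N}(0,(1-e^{-2t})I_n)$, of Poincar\'e constant $1/(1-e^{-2t})$. The standard convolution-stability estimate for Poincar\'e constants (obtained via the variance decomposition $\Var = \E[\Var(\cdot\,|\,Y)] + \Var(\E[\,\cdot\,|\,Y])$) then gives $\nu_t \in {\cal P}(\lambda_t)$ with
\begin{equation*}
\lambda_t \, = \, \frac{\lambda}{\lambda + e^{-2t}(1 - \lambda)},
\end{equation*}
which interpolates between $\lambda_0 = \lambda$ and $\lambda_\infty = 1$. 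The centering is likewise preserved by Mehler: $\int x\, d\nu_t = e^{-t} \int x\, d\nu = 0$ for every $t \geq 0$.

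The main step is a refined Bakry--\'Emery bound. The standard $\Gamma_2$ identity for the Ornstein--Uhlenbeck semigroup gives
\begin{equation*}
\frac{d}{dt} {\rm I}(\nu_t) \, = \, - 2 \int \bigl( | \Hess \log f_t |^2 + | \nabla \log f_t |^2 \bigr) f_t\, d\gamma.
\end{equation*}
For each $i$, integration by parts in $\gamma$ yields $\int \partial_i \log f_t\, d\nu_t = \int \partial_i f_t\, d\gamma = \int x_i\, d\nu_t = 0$, using centering of $\nu_t$. Applying the Poincar\'e inequality of $\nu_t$ to each $\partial_i \log f_t$ and summing over $i$ therefore produces
\begin{equation*}
\int | \Hess \log f_t |^2 f_t\, d\gamma \, \geq \, \lambda_t \, {\rm I}(\nu_t),
\end{equation*}
so that the $\Gamma_2$ identity delivers the differential inequality $\frac{d}{dt} {\rm I}(\nu_t) \leq -2(1+\lambda_t) {\rm I}(\nu_t)$. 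Using the explicit primitive $\int_0^t \lambda_s\, ds = t + \tfrac{1}{2} \log(\lambda + e^{-2t}(1-\lambda))$, Gr\"onwall's lemma yields
\begin{equation*}
{\rm I}(\nu_t) \, \leq \, {\rm I}(\nu)\, \frac{e^{-4t}}{\lambda + e^{-2t}(1-\lambda)},
\end{equation*}
and after the substitution $u = e^{-2t}$ the integral $\int_0^\infty {\rm I}(\nu_t)\, dt$ reduces to $\tfrac{1}{2} \int_0^1 u/(\lambda + u(1-\lambda))\, du$, which a one-line partial-fractions computation identifies with $c(\lambda)/2$.

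The only real obstacle is justifying these manipulations for a general density: one needs enough regularity and integrability of $\log f_t$ to make sense of $\Hess \log f_t$ and to apply the Poincar\'e inequality of $\nu_t$ to each $\partial_i \log f_t$. Both issues are resolved by the smoothing action of $P_t$ for $t > 0$ together with a standard approximation of the initial density (mollification and truncation away from $0$ and $\infty$), so that the differential identities above hold in their classical sense before the bound is passed to the limit.
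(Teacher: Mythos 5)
Your proposal is correct and follows essentially the same strategy as the paper: de Bruijn's identity, propagation of the Poincar\'e constant along the Ornstein--Uhlenbeck flow, the $\Gamma_2$ differential inequality combined with the Poincar\'e inequality applied to $\nabla \log f_t$ (justified by the preserved centering), and then Gr\"onwall plus an explicit integral. The one place you diverge is in proving $\nu_t \in \mathcal{P}(\lambda_t)$: you argue probabilistically via Mehler's representation of $\nu_t$ as the convolution of the dilated measure with a Gaussian, scaling of Poincar\'e constants, and subadditivity of inverse spectral gaps under convolution (via the conditional variance decomposition), whereas the paper derives the same $\lambda_t$ from the semigroup local Poincar\'e inequality $P_t(g^2) \le (P_t g)^2 + (1-e^{-2t})P_t(|\nabla g|^2)$ together with the gradient bound $|\nabla P_t g|^2 \le e^{-2t}P_t(|\nabla g|^2)$; both are standard and yield the identical constant, so this is a cosmetic rather than structural difference.
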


The constant is sharp, as can be seen when taking $\nu$ with density
$ f(x) = \sqrt {\lambda} \, e^{(1-\lambda)x^2/2}$, $\lambda > 0$,
on the line. Of course, since the constant $1/2$ in the Gaussian LSI is optimal,
such a strengthening can only be expected to hold on a subset of probability measures.

 In dimension $n = 1$, the class of probability measures satisfying a Poincar\'e inequality
\eqref {eq.poincare}
has been completely characterized. A probability measure $\nu$ with density $p$
with respect to the Lebesgue measure and median $m$ satisfies a Poincar\'e inequality
if and only if the following holds (see~\cite{bg99,bgl14}):
$$
A^+ \, = \, \underset{x \geq m}{\sup} \hspace{1mm} 
   \nu \big ([x, +\infty[ \big )\int_m^x{\frac{1}{p(t)} \, dt}   \, < \, \infty ,
$$
$$
A_- \, = \, \underset{x \leq m}{\sup} \hspace{1mm} 
    \nu \big (] -\infty, x] \big )\int_x^m{\frac{1}{p(t)} \, dt}  \, < \, \infty .
$$
Moreover, the optimal Poincar\'e constant $\lambda_{\rm opt}$ for $\nu$ satisfies
$$
 \frac{1}{2} \, \max(A^+, A^-)  \, \leq \,  \lambda_{\rm opt} \, \leq \,  4\max(A^+, A^-) .$$
In higher dimension, there is no such simple characterization,
but fairly general sufficient conditions are available. For example, if $\nu$ has a density
of the form $ e^{-V}$ with respect to the Lebesgue measure, a sufficient condition is
the existence of $a \in \, ]0, 1[$ such that $a|\nabla V|^2 - \Delta V$ is bounded from
below by some positive constant outside of some ball (see \cite{bbcg08}). A more classical
condition is the Bakry-\'Emery criterion 
\be \label{eq.bakryemery}
\Hess(V) \geq \eta \, {\rm Id} \qquad {\hbox {for some}} \, \, \, \eta >0
\en
on the potential $V$ (\cite {be85,v03,bgl14}) ensuring a Poincar\'e inequality with
constant $\lambda = \eta$.

\medskip

As an equivalent formulation of Theorem~\ref {thm.improvedlsi}, for $\nu$ centered
in ${\cal P}(\lambda)$,
\begin {equation} \label {eq.equi}
\delta_{\rm LSI} (\nu) \, \geq \, c_1 (\lambda) \,  {\rm I}(\nu)
\end {equation}
where $c_1 (\lambda ) = \frac {1}{2} \, (1 - c(\lambda))$.
The non-centered version of \eqref {eq.equi}, and thus
of Theorem~\ref {thm.improvedlsi}, reads as follows.    

\begin {corollary} \label{cor.stabilityw2centered}
For any probability measure $d\nu = f d \gamma$
in the class ${\cal P} (\lambda)$ with barycenter $b = b (\nu)$,
$$
\delta_{\rm LSI}(\nu) \, \ge \,  c_1(\lambda) 
    \int_{\R^n} \big |\nabla (\log f) - b \big |^2 d\nu.
$$
\end {corollary}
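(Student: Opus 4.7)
The strategy is to reduce to the centered case by translation. Let $\tilde \nu$ be the push-forward of $\nu$ under $x \mapsto x - b$, so that $\tilde \nu$ is centered. Its density with respect to $\gamma$ is
$$
\tilde f(y) \, = \, f(y+b)\, e^{-b\cdot y - |b|^2/2},
$$
and hence $\log \tilde f$ differs from $\log f \circ (\cdot + b)$ by an affine function of slope $-b$. The key observation to isolate is that the LSI deficit is invariant under this translation.

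First, I would verify that $\tilde \nu \in \mathcal{P}(\lambda)$: substituting $h(x) = g(x-b)$ into the Poincar\'e inequality \eqref{eq.poincare} for $\nu$ gives the same inequality for $\tilde \nu$ with the same constant. Next, I would compute how ${\rm H}$ and ${\rm I}$ transform. A direct change of variables gives
$$
{\rm H}(\tilde \nu) \, = \,\int \big(\log f(x) - b\cdot x + \tfrac{|b|^2}{2}\big) d\nu \, = \, {\rm H}(\nu) - \tfrac{|b|^2}{2},
$$
since $\int x \, d\nu = b$. For the Fisher information, the essential identity is
$$
\int_{\R^n} \nabla (\log f) \, d\nu \, = \, \int_{\R^n} \nabla f \, d\gamma \, = \, \int_{\R^n} x f \, d\gamma \, = \, b,
$$
obtained by Gaussian integration by parts. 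Expanding the square then yields
$$
\int_{\R^n}\big|\nabla (\log f) - b\big|^2 d\nu \, = \, {\rm I}(\nu) - |b|^2 \, = \,{\rm I}(\tilde \nu),
$$
where the last equality comes from the analogous change of variables for $|\nabla \log \tilde f|^2$.

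Combining these two computations gives $\delta_{\rm LSI}(\tilde \nu) = \tfrac{1}{2}{\rm I}(\tilde \nu) - {\rm H}(\tilde \nu) = \delta_{\rm LSI}(\nu)$, i.e.\ the deficit is translation-invariant (consistent with the exponential form \eqref{eq.extremal} of the extremizers). Applying the centered statement \eqref{eq.equi} to $\tilde \nu$, which lies in $\mathcal{P}(\lambda)$ and is centered, gives $\delta_{\rm LSI}(\tilde \nu) \geq c_1(\lambda)\, {\rm I}(\tilde \nu)$, and translating back yields the corollary.

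There is no serious obstacle here; the only point requiring care is ensuring that Gaussian integration by parts is justified so that $\int \nabla(\log f)\, d\nu = b$, and that $\tilde \nu$ inherits the Poincar\'e constant. Both are standard under the implicit regularity assumptions on $f$ mentioned after \eqref{eq.lsi}.
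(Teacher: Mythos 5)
Your proposal is correct and follows essentially the same route as the paper's own argument: the paper also recenters via the translated measure $\nu_b$ defined in \eqref{eq.nub}, observes ${\rm H}(\nu_b) = {\rm H}(\nu) - |b|^2/2$ and ${\rm I}(\nu_b) = {\rm I}(\nu) - |b|^2$ so that the deficit is translation-invariant, and then applies the centered bound \eqref{eq.equi}. The only point you elaborate beyond the paper's sketch is the explicit Gaussian integration-by-parts step identifying $\int \nabla(\log f)\,d\nu = b$ and hence $\int |\nabla(\log f) - b|^2 d\nu = {\rm I}(\nu_b)$, which the paper leaves as ``easily checked.''
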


Corollary~\ref {cor.stabilityw2centered} follows by a rescaling argument involving the barycenter.
For $d \nu = f d\gamma $ with mean $b$, define 
\be \label {eq.nub}
d\nu_b(x) \, = \,  f(x + b)e ^{-\big(b\cdot x + \frac{|b|^2}{2}\big)} \, d\gamma(x).
\en
The probability measure $\nu_b$ has mean $0$ and, as is easily checked, satisfies
${\rm H}(\nu_b) = {\rm H}(\nu) - \frac{|b|^2}{2}$ and
${\rm I}(\nu_b) = {\rm I}(\nu) - |b|^2$,
so that $\delta_{\rm LSI}(\nu_b) = \delta_{\rm LSI}(\nu)$.
The conclusion then easily follows.

\medskip

Theorem~\ref {thm.improvedlsi} improves upon the recent \cite{im13} 
where stronger conditions on the Hessian of the density $f$ are considered
 (in particular parts of the class $\mathcal {P} (\lambda)$), with weaker
dependence of the constant. The work \cite{im13} actually investigates
how far an admissible density is from saturating the logarithmic Sobolev inequality
as measured with Wasserstein distance, providing a control of the deficit
$\delta_{\rm LSI} (\nu)$ in the logarithmic Sobolev inequality by
the (quadratic) Kantorovich-Wasserstein distance ${\rm W}_2(\nu, \gamma)$.
Within the class ${\cal P}(\lambda)$, this
is easily achieved via Theorem~\ref {thm.improvedlsi} together with
the Talagrand quadratic transportation cost inequality \cite {t96} (cf.~\cite {v03,v09,bgl14})
\be \label {eq.talagrand}
{\rm W}_2 (\nu, \gamma )^2 \,\leq \, 2 \, {\rm H}(\nu)
\en
holding for all probability measures $\nu $ (absolutely continuous with respect to $\gamma$).
Recall that the Kantorovich-Wasserstein distance $ {\rm W}_2 (\nu, \mu )$
between two probability measures $\nu$ and $\mu$ is given by
$$
{\rm W}_2 (\nu, \mu ) \, = \, \inf \bigg ( \int_{\R^n} \! \int_{\R^n} |x-y|^2 d \pi (x,y) \bigg)^{1/2}
$$
where the infimum is over all couplings $\pi$ of probability measures on $\R^n \times \R^n$
with respective marginals $\nu$ and $\mu$. Note that if $\nu \in {\cal P} (\lambda)$,
it has necessarily a second moment so that the
Kantorovich-Wasserstein distance ${\rm W_2} (\nu, \gamma)$ is finite.

\begin {corollary} \label{cor.stabilityw2}
For any centered probability measure $d\nu = f d \gamma$ in the class ${\cal P} (\lambda)$,
$$ 
\delta_{\rm LSI}(\nu) 
    \, \geq \,    c_2(\lambda) \, {\rm W}_2(\nu, \gamma)^2, $$
where $ c_2(\lambda)= \frac{1}{2} \big( \frac{1}{c(\lambda )} - 1 \big)$ and $c(\lambda)$
is as in Theorem \ref{thm.improvedlsi}. 
\end {corollary}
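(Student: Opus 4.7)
The plan is to combine Theorem~\ref{thm.improvedlsi} with the Talagrand quadratic transportation cost inequality \eqref{eq.talagrand}. Since $\nu$ is assumed centered and in ${\cal P}(\lambda)$, Theorem~\ref{thm.improvedlsi} gives the bound ${\rm H}(\nu) \leq \frac{c(\lambda)}{2}\,{\rm I}(\nu)$, which I would rewrite as ${\rm I}(\nu) \geq \frac{2}{c(\lambda)}\,{\rm H}(\nu)$, and then insert into the definition of the deficit to obtain
$$
\delta_{\rm LSI}(\nu) \, = \, \frac{1}{2}\,{\rm I}(\nu) - {\rm H}(\nu) \, \geq \, \left( \frac{1}{c(\lambda)} - 1 \right) {\rm H}(\nu).
$$

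Next I would invoke the Talagrand inequality \eqref{eq.talagrand}, namely ${\rm H}(\nu) \geq \frac{1}{2}\,{\rm W}_2(\nu,\gamma)^2$, which applies to any probability measure absolutely continuous with respect to $\gamma$ (and the second moment required for ${\rm W}_2$ to be finite is guaranteed by $\nu \in {\cal P}(\lambda)$, as already noted in the excerpt). Chaining the two bounds yields
$$
\delta_{\rm LSI}(\nu) \, \geq \, \frac{1}{2}\left( \frac{1}{c(\lambda)} - 1 \right) {\rm W}_2(\nu,\gamma)^2 \, = \, c_2(\lambda)\,{\rm W}_2(\nu,\gamma)^2,
$$
which is exactly the conclusion. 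Because $c(\lambda) < 1$ for $\lambda \neq 1$, the constant $c_2(\lambda)$ is strictly positive, as it should be.

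There is no real obstacle here: the corollary is a one-line consequence of Theorem~\ref{thm.improvedlsi} and \eqref{eq.talagrand}, and the centering hypothesis is exactly what is needed in order to apply the theorem. The only thing to double-check is that the sign conventions match, i.e.\ that Talagrand's inequality is used in the form ${\rm H} \geq \frac{1}{2}{\rm W}_2^2$ (a lower bound on entropy), since we need a lower bound on ${\rm H}(\nu)$ to propagate to a lower bound on $\delta_{\rm LSI}(\nu)$.
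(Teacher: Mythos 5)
Your argument is exactly the one the paper intends: it states just before the corollary that the bound "is easily achieved via Theorem~\ref{thm.improvedlsi} together with the Talagrand quadratic transportation cost inequality \eqref{eq.talagrand}," and your two-step chain (rewrite Theorem~\ref{thm.improvedlsi} as ${\rm I}(\nu) \geq \frac{2}{c(\lambda)}{\rm H}(\nu)$, feed into the deficit, then apply Talagrand) is precisely that. The proof is correct and matches the paper's approach.
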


This corollary may be compared to the Otto-Villani HWI inequality \cite{ov00}
(cf.~\cite {v03,v09,bgl14}), valid for any probability $\nu$,
\be \label {eq.hwi}
{\rm H} (\nu)  \, \leq \, {\rm W}_2(\nu, \gamma) \, \sqrt {\, {\rm I}(\nu)} 
   - \frac {1}{2} \, {\rm W}_2(\nu, \gamma)^2.
\en
It should be mentioned that one cannot expect
$$ 
\delta_{\rm LSI}(\nu)   \, \geq \, c \, {\rm W}_2(\nu, \gamma)^2 
$$
to hold for some $c>0$ and all probability measures $\nu$.
Indeed, such an inequality combined with the HWI inequality would then imply the
logarithmic Sobolev inequality
$ {\rm H}(\nu) \, \leq \frac{1 + c}{2 + 4c} \, {\rm I}(\nu) $
with therefore a constant strictly better than the optimal $1/2$.
A complete stability result for the Gaussian LSI therefore requires a distance weaker than ${\rm W}_2$.

In this direction, Theorem~\ref {thm.improvedlsi} may also be used to provide a lower bound
on the deficit $ \delta_{\rm LSI}$ in terms of the total variation. Indeed, as the
standard Gaussian measure $\gamma $
satisfies a $(1,1)$-Poincar\'e inequality (cf.~e.g.~\cite {le01})
\be \label {eq.poincare11}
\int_{\R^n} |g | \, d\gamma  \, \leq \,  2 \int_{\R^n} |\nabla g| \, d\gamma
\en
for every smooth $g : \R^n \to \R$ with mean zero, if $d\nu=f d\gamma$,
$$
\int_{\R^n} |f-1| \, d\gamma 
    \, \leq \,  2 \int_{\R^n} |\nabla f| \, d\gamma   \, \leq \, 2 \, \sqrt {{\rm I}(\nu)} 
$$
by the Cauchy-Schwarz inequality. We then only state the consequence
of \eqref {eq.equi} in the centered case.

\begin{corollary} \label{cor.tv}
For any centered probability measure $d\nu = f d \gamma$ in the class ${\cal P}(\lambda)$,
$$
\delta_{\rm LSI}(\nu) \, \geq \,
    \frac{c_1(\lambda)}{4} \bigg ( \int_{\R^n} |f-1| \, d\gamma \bigg)^2
       \, = \,  \frac{c_1(\lambda)}{4} \, {|| \nu - \gamma||}^2_{\rm TV}.
$$ 
\end{corollary}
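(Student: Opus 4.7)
The proof is essentially a one-line combination of ingredients already assembled in the paragraph preceding the statement, so the plan is simply to make explicit the chain of inequalities.

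First I would invoke the $(1,1)$-Poincar\'e inequality \eqref{eq.poincare11} for the Gaussian measure $\gamma$. Applied to $g = f - 1$ (which has mean zero with respect to $\gamma$ since $\nu$ is a probability measure), this gives
$$
\int_{\R^n} |f-1|\, d\gamma \,\leq\, 2 \int_{\R^n} |\nabla f|\, d\gamma.
$$
Next I would handle the right-hand side by a Cauchy--Schwarz trick designed to reproduce the Fisher information: writing $|\nabla f| = \frac{|\nabla f|}{\sqrt f}\cdot \sqrt f$ and using $\int f d\gamma = 1$,
$$
\int_{\R^n} |\nabla f|\, d\gamma
\,\leq\, \bigg(\int_{\R^n} \frac{|\nabla f|^2}{f}\, d\gamma\bigg)^{\!1/2} \bigg(\int_{\R^n} f\, d\gamma\bigg)^{\!1/2}
\,=\, \sqrt{{\rm I}(\nu)}.
$$
Combining the last two displays and squaring yields
$$
\bigg(\int_{\R^n} |f-1|\, d\gamma\bigg)^{\!2} \,\leq\, 4\, {\rm I}(\nu).
$$

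The final step is to invoke the strengthened LSI \eqref{eq.equi} from Theorem~\ref{thm.improvedlsi}, valid because $\nu$ is centered and lies in ${\cal P}(\lambda)$: $\delta_{\rm LSI}(\nu) \geq c_1(\lambda)\, {\rm I}(\nu)$. Substituting the previous estimate for ${\rm I}(\nu)$ gives exactly the claimed bound, and the identification with $\|\nu - \gamma\|_{\rm TV}^2$ is simply the definition of total variation for absolutely continuous measures.

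There is no real obstacle here: every ingredient (the Gaussian $(1,1)$-Poincar\'e inequality, the Cauchy--Schwarz passage from $\int |\nabla f|\, d\gamma$ to $\sqrt{{\rm I}(\nu)}$, and the improved LSI) is either cited or proved earlier, and the proof amounts to chaining them. The only minor subtlety to mention is that the centering hypothesis on $\nu$ is used in order to apply Theorem~\ref{thm.improvedlsi} in its form \eqref{eq.equi}; the Gaussian Poincar\'e step itself requires no centering of $\nu$, only that $f-1$ is $\gamma$-mean zero.
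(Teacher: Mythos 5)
Your proof is correct and follows exactly the argument the paper itself gives in the paragraph preceding the corollary: apply the Gaussian $(1,1)$-Poincar\'e inequality to $g = f-1$, pass to $\sqrt{{\rm I}(\nu)}$ via Cauchy--Schwarz, square, and invoke \eqref{eq.equi}. There is nothing to add.
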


\medskip

While Corollaries~\ref {cor.stabilityw2} and \ref{cor.tv} are strictly weaker
than Theorem~\ref {thm.improvedlsi},
they have the advantage of providing a lower bound on the deficit in the Gaussian
LSI in terms of a metric. 

A one-dimensional stability result of the same kind as Corollary \ref{cor.stabilityw2}
is proven in Corollary~4.4 of \cite{bgrs13}, however with a worse constant of proportionality.
The main assumption is uniform log-concavity of $\nu$ (i.e. \eqref {eq.bakryemery}) 
which is used to apply a $(1,1)$-Poincar\'e inequality.  As far as we know,
the argument of \cite{bgrs13} does not extend to higher dimensions. Nevertheless, the one-dimensional result may be combined with a tensorization argument to cover the case of $n$-dimensional random vectors with uniformly log-concave distributions whose one-dimensional projections form a martingale. Such an assumption is not the same as simply assuming that the mean of $\nu$ is zero. More generally,  
$$ 
\delta_{\rm LSI}(\nu)
    \, \geq \, c \, {\rm W}_2(\bar{\nu}, \gamma)^2, 
$$
where $\bar{\nu}$ is the law of a random vector $\bar{X}$ obtained by
modifying a random vector $X$ with law $\nu$ in such a way that its
one-dimensional marginals $X_1,\ldots,X_n$ form a martingale \cite {bgrs13}.
For unconditional random variables, this is the same as assuming the mean to be zero, but
in general it does not seem like ${\rm W}_2(\bar{\nu}, \gamma)$ and
${\rm W}_2(\nu, \gamma)$ can be easily compared.
The contribution \cite{bgrs13} also contains deficit estimates for general $\nu$,
but with lower bounds that are either not a power of a distance,
are dimension-dependent, or involve $\bar \nu$.
For example, there is a universal constant $c>0$ such that for all smooth probability
measures $\nu$ on $\R^n$, 
\begin{equation} \label{eq.bgrs}
\delta_{\rm LSI}(\nu) \, \ge \,  c \, \frac{\Tau(\bar \nu, \gamma)^2}{{\rm H}(\bar \nu)}
\end{equation}  
where $\bar \nu$ is the previously discussed martingale rearrangement of $\nu$
and $\Tau$ is a transportation cost associated to the function $t \mapsto t - \log(1 + t)$.

\bigskip

The second main result of this note investigates the deficit in the 
Talagrand quadratic transportation cost inequality \eqref {eq.talagrand}.
A result of Otto and Villani \cite{ov00} states that a measure
satisfying a logarithmic Sobolev inequality automatically satisfies a Talagrand-type inequality.
It is easy to see, using the HWI inequality \eqref {eq.hwi}, that the cases of equality for Talagrand's
inequality are exactly the same as for the Gaussian LSI. Therefore,
it is natural to investigate lower bounds on the {\it Talagrand deficit}
$$
\delta_{\rm Tal}(\nu) \, = \, 2\, {\rm H}(\nu) - {\rm W}_2(\nu, \gamma)^2.
$$

In dimension one, it was shown by Barthe and Kolesnikov \cite{bk08}
that the deficit $ \delta_{\rm Tal}(\nu)$ satisfies
$$
\delta_{\rm Tal}(\nu) \, \geq \,
    c  \, \inf_\pi \int_{\R^n} \varphi \big (|x -y| \big ) \pi(dx, dy),
$$
where the infimum is over couplings $\pi$ of $\nu $ and $\gamma$, and
$\varphi(t) = t - \log(1 + t)$. Note that the right-hand side
in this inequality is an optimal transport cost, with a cost that is quadratic-then-linear
in the distance. This inequality immediately yields the weaker version
$$
\delta_{\rm Tal}(\nu) \, \geq \, 
    c \, \min \left({\rm W}_1(\nu, \gamma)^2, {\rm W}_1(\nu, \gamma)\right),
$$
where ${\rm W}_1$ is the ${\rm L}^1$-Kantorovitch-Wasserstein distance (with $\ell^2$-cost function on $\R^n$) between the
one-dimensional measures $\nu$ and $\gamma$.

We establish here the following multi-dimensional version of the Barthe-Kolesnikov result. Let
$$
{\rm W}_{1,1} (\nu, \mu ) 
      \, = \, \inf \int_{\R^n} \! \int_{\R^n} \sum_{i=1}^n|x_i - y_i| d \pi (x,y)
$$
be the ${\rm L}^1$-Kantorovich-Wasserstein distance with $\ell^1$-cost function on $\R^n$ where the infimum is over couplings $\pi$ of $\nu $ and $\mu$.

\begin{theorem} \label{thm.deficittal}
There is a numerical constant $c >0$ such that for any centered probability measure
$d\nu=f d\gamma$ on $\R^n$ with finite second moments and $f>0$ locally bounded,
$$
\delta_{\rm Tal}(\nu) \, \geq \, c \, \min \bigg ( \frac {{\rm W}_{1,1} (\nu, \gamma ) ^2}{n} \, ,
       \frac {{\rm W}_{1,1} (\nu, \gamma ) }{\sqrt n} \bigg )  . 
$$
\end {theorem}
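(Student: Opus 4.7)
The strategy is to reduce the $n$-dimensional Talagrand deficit to a sum of one-dimensional ones by sequential conditioning on coordinates, apply the Barthe--Kolesnikov one-dimensional result to each piece, and aggregate over coordinates using the $\ell^1$ geometry that both $\gamma^n = (\gamma^1)^{\otimes n}$ and the cost defining $W_{1,1}$ enjoy. Writing $X = (X_1,\ldots,X_n) \sim \nu$ and denoting by $\nu_i(\cdot\mid X_{<i})$ the conditional law of $X_i$ given $X_{<i}$, the chain rule for entropy gives $H(\nu\mid\gamma^n) = \sum_i \E[H(\nu_i(\cdot\mid X_{<i})\mid\gamma^1)]$, while the Knothe--Rosenblatt triangular map from $\gamma^n$ to $\nu$ produces the coupling bound $W_2^2(\nu,\gamma^n) \leq \sum_i \E[W_2^2(\nu_i(\cdot\mid X_{<i}),\gamma^1)]$, so that
$$\delta_{\rm Tal}(\nu) \,\geq\, \sum_{i=1}^n \E\big[\delta_{\rm Tal}^{(1)}(\nu_i(\cdot\mid X_{<i}))\big].$$

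Since $\delta_{\rm Tal}^{(1)}$ is translation-invariant, I next replace each conditional by its centered version $\tilde\nu_i$ obtained by subtracting $m_i(X_{<i}) := \E[X_i \mid X_{<i}]$, and apply Barthe--Kolesnikov in dimension one:
$$\delta_{\rm Tal}^{(1)}\big(\nu_i(\cdot\mid X_{<i})\big) \,\geq\, c \int \varphi\big(|\tilde T_i(u) - u|\big)\,d\tilde\nu_i(u), \qquad \varphi(t) = t - \log(1+t),$$
with $\tilde T_i$ the monotone rearrangement from $\tilde\nu_i$ to $\gamma^1$. Setting $U_i = X_i - m_i(X_{<i})$ and $Z_i = \tilde T_i(U_i)$, the conditional law of $Z_i$ given $X_{<i}$ is $\gamma^1$ and hence $Z_i$ is independent of $X_{<i}$; iterating, $Z = (Z_1,\ldots,Z_n) \sim \gamma^n$ and $(X,Z)$ couples $\nu$ with $\gamma^n$. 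The triangle inequality $|X_i - Z_i| \leq |U_i - \tilde T_i(U_i)| + |m_i(X_{<i})|$ then yields $W_{1,1}(\nu,\gamma^n) \leq S + M$, where $S := \sum_i \E|U_i - \tilde T_i(U_i)|$ and $M := \sum_i \E|m_i(X_{<i})|$.

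The bounds $\varphi(r) \geq c_1 \min(r^2,r)$ and $\E[\min(W^2,W)] \geq \tfrac14 \min((\E W)^2, \E W)$ for $W \geq 0$ (the latter by splitting $W = W\mathbf{1}_{\{W\leq 1\}} + W\mathbf{1}_{\{W>1\}}$ and using Cauchy--Schwarz on the first piece) transfer the conditional deficit bound to the displacements $|U_i - \tilde T_i(U_i)|$. A case split on which indices $i$ contribute via the quadratic versus the linear regime of $\min(r^2,r)$ then gives
$$\delta_{\rm Tal}(\nu) \,\geq\, c_2\, \min\!\Big(\frac{S^2}{n},\,\frac{S}{\sqrt n}\Big).$$
If $M \leq \tfrac12 W_{1,1}(\nu,\gamma^n)$ then $S \geq \tfrac12 W_{1,1}(\nu,\gamma^n)$ and the theorem follows from the monotonicity of $t \mapsto \min(t^2/n, t/\sqrt n)$.

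The genuinely delicate case is $M \gtrsim W_{1,1}(\nu,\gamma^n)$: the conditional means then carry most of the $\ell^1$-transport to $\gamma^n$, while the tensorization above is wasteful because it discards exactly the cross-terms responsible for correlations (already for a centered bivariate Gaussian with small correlation $\rho$, $\sum_i \E[\delta_{\rm Tal}^{(1)}(\nu_i(\cdot\mid X_{<i}))]$ is of order $\rho^4$ whereas $\delta_{\rm Tal}(\nu)$ is of order $\rho^2$). Closing this gap is the main obstacle I anticipate: one natural remedy is to replace Knothe--Rosenblatt by the Brenier map $T = \nabla\phi$ and to exploit the identity $\delta_{\rm Tal}(\nu) = 2\int \sum_i \varphi(\lambda_i(D^2\phi)-1)\,d\gamma$, combined with a Gaussian Poincar\'e-type inequality comparing $\int \varphi(|\partial_i\psi|)\,d\gamma$ with $\int\varphi(\partial_i^2\psi)\,d\gamma$ on the convex function $\psi = \phi - |x|^2/2$; another is to prove an independent estimate $\delta_{\rm Tal}(\nu) \gtrsim \sum_i \Var(m_i(X_{<i}))$, which via $M^2 \leq n \sum_i \E m_i^2$ and Cauchy--Schwarz would control $M$ directly by the deficit.
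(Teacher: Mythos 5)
Your proposal contains a genuine gap, and you have yourself correctly identified it. The Knothe--Rosenblatt/conditioning tensorization
$$\delta_{\rm Tal}(\nu) \,\geq\, \sum_{i=1}^n \E\big[\delta_{\rm Tal}^{(1)}(\nu_i(\cdot\mid X_{<i}))\big]$$
discards the contribution of correlations, and your bivariate Gaussian example is a real counterexample to closing the argument this way: with unit marginals and small correlation $\rho$, the left side is of order $\rho^2$ while the right side is only of order $\rho^4$, whereas $W_{1,1}(\nu,\gamma)$ is of order $\rho$, so the theorem demands $\delta_{\rm Tal}\gtrsim\rho^2$. Thus in the regime $M\gtrsim W_{1,1}(\nu,\gamma)$ your conditional-deficit sum is quantitatively too small and cannot be repaired by the case split alone. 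The two ``remedies'' you sketch at the end are not carried out, and the second (a direct bound $\delta_{\rm Tal}(\nu)\gtrsim\sum_i\Var(m_i)$) would itself be a nontrivial lemma requiring proof; as written, the argument does not establish the theorem.

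For comparison, the paper's proof works entirely with the Brenier map $T=\nabla\phi$ pushing $\gamma$ onto $\nu$ and never tensorizes. Following Cordero-Erausquin, with $\lambda_1,\ldots,\lambda_n\geq 0$ the eigenvalues of $\nabla T$ one has $\delta_{\rm Tal}(\nu)\geq\int\sum_i[\lambda_i-1-\log\lambda_i]\,d\gamma$ (note this is an inequality, not the identity you wrote). One then bounds $t-1-\log t\geq\tfrac16\min(|t-1|^2,|t-1|)$, splits indices according to whether $|\lambda_i-1|\leq 1$, applies Jensen, and --- after an optimization over a threshold $\alpha$ --- reduces to bounding $\big(\int\sqrt{\sum_i|\lambda_i-1|^2}\,d\gamma\big)^2$ from below. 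The key dimensional step is the Cauchy--Schwarz estimate $\sqrt{\sum_i|\lambda_i-1|^2}=\|\nabla T-\mathrm{Id}\|_{\rm F}\geq n^{-1/2}\sum_i|\nabla(T_i-x_i)|$, after which the Gaussian $(1,1)$-Poincar\'e inequality applied to the mean-zero functions $T_i-x_i$ (legitimate since $\phi\in W^{2,1}_{\rm loc}$ by Monge--Amp\`ere regularity, cf.\ the paper's Remark 7) yields $\int|T_i-x_i|\,d\gamma$ and hence $W_{1,1}(\nu,\gamma)$. This is closer to the first remedy you hinted at, but it does not pass through a Poincar\'e-type inequality for $\varphi$-costs; the nonlinearity is handled first by the min-bound and the $\alpha$-optimization, and only afterwards does the linear $(1,1)$-Poincar\'e inequality enter. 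If you want to salvage your write-up, you should abandon the conditional decomposition and develop this Brenier-map route from the start.
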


One feature of this result is that it is valid for general measures. Moreover,
the lower bound is expressed in terms of a metric
on the space of probability measures on $\R^n$ and the exponent is independent
of the dimension.
In general, the deficit in Theorem \ref {thm.deficittal} is only optimal for small perturbations
of the Gaussian. For an $n$-dimensional product measure $\nu^n = \nu^{\otimes n}$,
$\delta_{\rm Tal}(\nu^n) = n \delta_{\rm Tal}(\nu)$ grows linearly in $n$. 
This is also the behavior of
$$
\frac {{\rm W}_{1,1} (\nu^n, \gamma^n ) ^2}{n} \, = \,
   n \, {\rm W}_{1,1}(\nu, \gamma^1)^2.
$$
When $ n > \! > {\rm W}_{1,1}(\nu, \gamma^1)^{-2}$,
the expected growth is lost. Nevertheless, for product measures whose
one-dimensional marginals are close enough to $\gamma = \gamma^1$
(i.e. such that ${\rm W}_{1,1}(\nu,\gamma^1)^2 \leq \frac {c}{n}$),
Theorem~\ref {thm.deficittal} yields the correct order of magnitude in the dimension.

Theorem~\ref {thm.deficittal} furthermore 
yields a new proof of the equality case for the Gaussian LSI. 
Indeed, by the HWI inequality,
$$
\delta_{\rm LSI} (\nu)  \,  \geq  \, \frac {1}{2} \Big (\sqrt{{\rm I}(\nu)} - W_2(\nu, \gamma) \Big)^2.
$$
Therefore, if  $\nu$ is such that $ \delta_{\rm LSI} (\nu) = 0$, 
then ${\rm I}(\nu) = {\rm W}_2(\nu, \gamma)^2$. By the conjunction of the Talagrand
\eqref {eq.talagrand} and LSI \eqref{eq.lsi} inequalities, 
$$
{\rm W}_2(\nu, \gamma)^2 \, \leq \, 2 \, {\rm H} (\nu) \, \leq  \, {\rm I} (\nu),
$$
so that there is also equality in Talagrand's inequality and thus $ \delta_{\rm Tal}(\nu) = 0$.
Therefore, Theorem~\ref{thm.deficittal} implies that the only centered measure
satisfying $\delta_{\rm LSI}(\nu) = 0$ is precisely $\gamma$.
The non-centered case follows as for Corollary~\ref {cor.stabilityw2centered}.

The preceding argument may be quantified in terms of the ${\rm W}_{1,1}$ metric
and yields a general stability result for LSI. Recall $\nu_b$ from \eqref {eq.nub}.

\begin{corollary} \label{cor.deficit1} 
There is a numerical constant $c >0$ such that for any probability measure $d\nu=fd\gamma$ 
on $\R^n$ with $f>0$ locally bounded and positive entropy, and with barycenter $b = b(\nu)$,
$$
\delta_{\rm LSI}(\nu) \, \geq \, \frac{c}{ {\rm H}(\nu)}
    \, \min \bigg ( \frac {{\rm W}_{1,1} (\nu_b, \gamma ) ^4}{n^2} \, ,
       \frac {{\rm W}_{1,1} (\nu_b, \gamma )^2 }{ n} \bigg ).
$$
\end{corollary}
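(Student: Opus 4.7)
The plan is to reduce to the centered case, apply the HWI inequality \eqref{eq.hwi} to connect $\delta_{\rm LSI}$ to $\delta_{\rm Tal}$, and then invoke Theorem~\ref{thm.deficittal}. For the reduction I use the transformation $\nu \mapsto \nu_b$ of \eqref{eq.nub}: since $\nu_b$ is simply the translate of $\nu$ by $-b$, the identities ${\rm H}(\nu_b) = {\rm H}(\nu) - |b|^2/2$ and ${\rm W}_2(\nu_b,\gamma)^2 = {\rm W}_2(\nu,\gamma)^2 - |b|^2$ give $\delta_{\rm LSI}(\nu_b) = \delta_{\rm LSI}(\nu)$, $\delta_{\rm Tal}(\nu_b) = \delta_{\rm Tal}(\nu)$, and ${\rm H}(\nu_b) \leq {\rm H}(\nu)$. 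Hence it suffices to prove the inequality for centered $\nu$, and I write $W = {\rm W}_{1,1}(\nu,\gamma)$, $H = {\rm H}(\nu)$, $I = {\rm I}(\nu)$, $D = \delta_{\rm LSI}(\nu)$, $T = \delta_{\rm Tal}(\nu)$.

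Next, I rearrange HWI into $D \geq \tfrac{1}{2}(\sqrt{I} - {\rm W}_2(\nu,\gamma))^2$, then factor $\sqrt{I} - {\rm W}_2(\nu,\gamma) = (I - {\rm W}_2(\nu,\gamma)^2)/(\sqrt{I} + {\rm W}_2(\nu,\gamma))$. Using the identity $I - {\rm W}_2(\nu,\gamma)^2 = 2D + T \geq T$ together with the trivial bound $\sqrt{I} + {\rm W}_2(\nu,\gamma) \leq 2\sqrt{I}$ yields $D \geq T^2/(8I)$. Substituting $I = 2H + 2D$ produces the quadratic inequality $D(H+D) \geq T^2/16$, and separating the cases $D \leq H$ and $D \geq H$ gives $D \geq c_0 \min(T, T^2/H)$ for a universal $c_0 > 0$.

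Finally, I insert the Talagrand deficit bound of Theorem~\ref{thm.deficittal}, $T \geq c_1 \min(W^2/n, W/\sqrt{n})$, into $D \geq c_0 \min(T, T^2/H)$ and perform a short case analysis on whether $W \leq \sqrt{n}$ or $W \geq \sqrt{n}$. The resulting four-term minimum collapses to $c \min(W^4/n^2, W^2/n)/H$ thanks to the elementary lower bound $H \geq W^2/(2n)$, which follows from Talagrand's inequality \eqref{eq.talagrand} and the Cauchy-Schwarz comparison $W \leq \sqrt{n}\,{\rm W}_2(\nu,\gamma)$. I expect the main technical point to lie in this last verification: the two terms $T$ and $T^2/H$ obtained after substitution must each dominate, up to universal constants, the claimed $\min(W^4/n^2, W^2/n)/H$, and it is precisely the bound $H \geq W^2/(2n)$ that makes this comparison go through.
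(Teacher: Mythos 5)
Your proof is correct and follows the same overall plan as the paper: reduce to the centered case via $\nu\mapsto\nu_b$, chain HWI, LSI and Talagrand to bound $\delta_{\rm LSI}$ below by a function of $\delta_{\rm Tal}$ and ${\rm H}$, then invoke Theorem~\ref{thm.deficittal}. The one algebraic divergence is in the middle step. The paper replaces $\sqrt{{\rm I}(\nu)}$ by $\sqrt{2\,{\rm H}(\nu)}$ at the very start (valid since $\sqrt{{\rm I}}\geq\sqrt{2{\rm H}}\geq{\rm W}_2\geq 0$, so $\sqrt{{\rm I}}-{\rm W}_2\geq\sqrt{2{\rm H}}-{\rm W}_2\geq 0$ and squaring is monotone), and then $(\sqrt{2{\rm H}}+{\rm W}_2)^2\leq 8{\rm H}$ yields directly $\delta_{\rm LSI}\geq\delta_{\rm Tal}^2/(16\,{\rm H})$, from which the conclusion is immediate upon inserting Theorem~\ref{thm.deficittal}. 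You instead carry ${\rm I}$ forward, substitute ${\rm I}=2({\rm H}+\delta_{\rm LSI})$, and land on the weaker-looking two-regime bound $\delta_{\rm LSI}\geq c_0\min(\delta_{\rm Tal},\delta_{\rm Tal}^2/{\rm H})$, which you then rescue with the extra observation ${\rm H}\geq{\rm W}_{1,1}(\nu,\gamma)^2/(2n)$ (Talagrand together with ${\rm W}_{1,1}\leq\sqrt{n}\,{\rm W}_2$). This works and is sound, but the early replacement of $\sqrt{{\rm I}}$ by $\sqrt{2{\rm H}}$ is precisely what keeps $\delta_{\rm LSI}$ out of the denominator and saves the final case analysis.
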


Indeed, as above, by the HWI \eqref {eq.hwi}, logarithmic Sobolev \eqref {eq.lsi} and Talagrand's
\eqref {eq.talagrand} inequalities,
$$
\delta_{\rm LSI} (\nu)  
   \,  \geq  \, \frac {1}{2} \Big (\sqrt{{\rm I}(\nu)} - {\rm W}_2(\nu, \gamma) \Big)^2
   \,  \geq  \, \frac {1}{2} \Big (\sqrt{2 \, {\rm H}(\nu)} - {\rm W}_2(\nu, \gamma) \Big)^2.
$$
Hence
$$
\delta_{\rm LSI} (\nu)  
   \,  \geq  \,  
    \frac{(2 \, {\rm H}(\nu) - W_2^2(\nu, \gamma) )^2}{2(\sqrt{2 \, {\rm H}(\nu)} 
    + {\rm W}_2(\nu, \gamma))^2}
    \, \geq \, \frac {\delta_{\rm Tal}(\nu)^2}{16\, {\rm H}(\nu)} \, .
$$

\noindent The result then follows from Theorem~\ref {thm.deficittal} for a centered $\nu$, and
in the general case by recentering as above.

Note that the inequality given by Corollary~\ref{cor.deficit1} is of a similar
form to \eqref{eq.bgrs} established in \cite{bgrs13} for smooth measures.
It does not seem that the measure $\bar{\nu}$ involved in \eqref{eq.bgrs} is directly
comparable to $\nu$ in general, whereas $\nu_b$ is an explicit transformation of
$\nu$. In particular, Corollary~\ref{cor.deficit1} immediately implies the equality
cases of LSI for general measures without any additional argument.

\medskip

Finally, there is also a lower bound on the deficit $\delta_{\rm LSI}(\nu)$ which may be expressed
only in terms of Kantorovich-Wasserstein distances. For simplicity, only the centered case
is considered.

\begin{corollary} \label{cor.deficit2}
There is a numerical constant $c >0$ such that for any
centered probability measure $d\nu = f d\gamma $ on $\R^n$
$$
\delta_{\rm LSI}(\nu) \, \geq \,
    \min \Bigg [ \frac{c \, {\rm W}_{1,1}(\nu, \gamma)^4}{n^2 \, {\rm W}_2(\nu, \gamma)^2} \, ,
             \frac {1}{2} \Bigg (\sqrt{{\rm W}_2(\nu, \gamma)^2 
         + \frac {c \, {\rm W}_{1,1}(\nu, \gamma)}{\sqrt n}} -  {\rm W}_2(\nu, \gamma)
         \Bigg)^2 \Bigg ].
$$
\end{corollary}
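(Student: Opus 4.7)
The plan is to follow the same HWI--Talagrand chain that produced Corollary~\ref{cor.deficit1}, but now eliminate the entropy prefactor by exploiting the dimensional comparison ${\rm W}_{1,1}(\nu,\gamma)\leq\sqrt n\,{\rm W}_2(\nu,\gamma)$ between the two Kantorovich--Wasserstein metrics. The starting estimate, identical to the one feeding Corollary~\ref{cor.deficit1}, comes from the HWI inequality~\eqref{eq.hwi} together with the logarithmic Sobolev inequality ${\rm I}(\nu)\geq 2{\rm H}(\nu)$ and the identity $2{\rm H}(\nu)={\rm W}_2(\nu,\gamma)^{2}+\delta_{\rm Tal}(\nu)$. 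Since $x\mapsto\tfrac{1}{2}\bigl(\sqrt{{\rm W}_2(\nu,\gamma)^{2}+x}-{\rm W}_2(\nu,\gamma)\bigr)^{2}$ is increasing, this produces the key intermediate bound
\[
\delta_{\rm LSI}(\nu) \,\geq\, \tfrac{1}{2}\Bigl(\sqrt{{\rm W}_2(\nu,\gamma)^{2}+\delta_{\rm Tal}(\nu)}-{\rm W}_2(\nu,\gamma)\Bigr)^{2}.
\]

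I would then insert into this estimate the lower bound on $\delta_{\rm Tal}(\nu)$ furnished by Theorem~\ref{thm.deficittal} and split into the two cases dictated by its minimum. When the active term is $c\,{\rm W}_{1,1}(\nu,\gamma)/\sqrt n$, direct substitution produces exactly the second entry of the minimum appearing in the corollary. When instead the active term is $c\,{\rm W}_{1,1}(\nu,\gamma)^{2}/n$, I would apply the algebraic identity $\sqrt{a+x}-\sqrt a=x/(\sqrt{a+x}+\sqrt a)$ with $a={\rm W}_2(\nu,\gamma)^{2}$ and $x=c\,{\rm W}_{1,1}(\nu,\gamma)^{2}/n$ to rewrite the resulting bound as $x^{2}/\bigl(2(\sqrt{a+x}+\sqrt a)^{2}\bigr)$, which must then be compared to $c'\,{\rm W}_{1,1}(\nu,\gamma)^{4}/(n^{2}{\rm W}_2(\nu,\gamma)^{2})$.

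The essential step, and the one which makes the whole argument close, is the elementary Cauchy--Schwarz inequality $\|z\|_{\ell^{1}}\leq\sqrt n\,\|z\|_{\ell^{2}}$ applied pointwise inside any coupling of $\nu$ and $\gamma$: integrating yields ${\rm W}_{1,1}(\nu,\gamma)\leq\sqrt n\,{\rm W}_2(\nu,\gamma)$, so that $x\leq c\,a$ in the notation above. Shrinking the constant $c$ delivered by Theorem~\ref{thm.deficittal} if necessary, this in turn bounds $\sqrt{a+x}+\sqrt a$ by a universal multiple of $\sqrt a={\rm W}_2(\nu,\gamma)$, converting the expression from the previous paragraph into a constant multiple of ${\rm W}_{1,1}(\nu,\gamma)^{4}/(n^{2}{\rm W}_2(\nu,\gamma)^{2})$. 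Since in either branch the final quantity is at least the corresponding entry of the minimum in the corollary, taking the smaller of them completes the proof. The only subtle point is thus this metric comparison, which guarantees that the first branch of Theorem~\ref{thm.deficittal} always sits in the regime where $\tfrac{1}{2}(\sqrt{a+x}-\sqrt a)^{2}$ is of order $x^{2}/a$ rather than of order $x$.
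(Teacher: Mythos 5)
Your proposal is correct and follows essentially the same route as the paper: the same HWI--Talagrand chain to reach $\delta_{\rm LSI}(\nu)\ge \tfrac{1}{2}\bigl(\sqrt{{\rm W}_2^2+\delta_{\rm Tal}(\nu)}-{\rm W}_2\bigr)^2$, the same insertion of Theorem~\ref{thm.deficittal}, and the same use of the metric comparison ${\rm W}_{1,1}\leq\sqrt n\,{\rm W}_2$ to turn the first branch into an expression of order ${\rm W}_{1,1}^4/(n^2\,{\rm W}_2^2)$. The only cosmetic difference is that the paper factors out ${\rm W}_2^2$ and linearizes $\sqrt{1+x}-1$ for bounded $x$, while you reach the same bound via the rationalization identity $\sqrt{a+x}-\sqrt a = x/(\sqrt{a+x}+\sqrt a)$; these are equivalent.
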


For the proof, argue as for Corollary~\ref {cor.deficit1} combining the HWI, logarithmic Sobolev and
Talagrand inequalities to get that
$$
\delta_{\rm LSI}(\nu) \, \geq \,
   \frac {1}{2} \Big ( \sqrt { {\rm W}_2(\nu, \gamma)^2 + \delta_{\rm Tal}(\mu)}
           -  {\rm W}_2(\nu, \gamma) \Big )^2.
$$
Write $ {\rm W}_2 = {\rm W}_2(\nu, \gamma)$ and
$ {\rm W}_{1,1} = {\rm W}_{1,1}(\nu, \gamma)$ to ease the notation.
By Theorem~\ref {thm.deficittal},
\begin {equation*} \begin {split}
\delta_{\rm LSI}(\nu) 
   & \, \geq \, \frac {1}{2} \, 
      \min \Bigg [ \Bigg (\sqrt{{\rm W}_2^2 
         + \frac {c \, {\rm W}_{1,1}^2}{n}} - {\rm W}_2 \Bigg)^2,
         \Bigg (\sqrt{{\rm W}_2^2 
         + \frac {c \, {\rm W}_{1,1}}{\sqrt n}} -  {\rm W}_2 \Bigg)^2 \Bigg ] \\
& \, = \, \frac {1}{2} \, 
   \min \Bigg [ {\rm W}_2^2
   \Bigg ( \sqrt { 1 + \frac {c \, {\rm W}_{1,1}^2}{n \, {\rm W}_2} } - 1 \Bigg )^2,
         \Bigg (\sqrt{{\rm W}_2^2 
         + \frac {c \, {\rm W}_{1,1}}{\sqrt n}} -  {\rm W}_2 \Bigg)^2 \Bigg ]. \\
\end {split} \end {equation*}
Since $ {\rm W}_{1,1}^2 \leq n \, {\rm W}_2^2$,
$$
\sqrt { 1 + \frac {c \, {\rm W}_{1,1}^2}{n \, {\rm W}_2^2} }  \, \geq \, 
   1 + \frac {c' \, {\rm W}_{1,1}^2}{n \, {\rm W}_2^2} 
$$
for some $c'>0$ only depending on $c $, and the claim follows.

\medskip

The rest of the paper is organized as follows. In Section~\ref{S2}, we prove the main results.
In Section~\ref{S3}, we establish several one-dimensional results.
Lastly, in Section~\ref{S4}, we present an improvement of the Bakry-\'Emery
theorem for symmetric measures satisfying a Poincar\'e inequality and obtain
quantitative versions of the Wehrl conjectures established by Lieb \cite{li78}
and Carlen \cite{c91} in the context of the coherent state transform.

\section{Proofs of Theorems~\ref {thm.improvedlsi} and~\ref {thm.deficittal}} \label{S2}

We start with the proof of Theorem~\ref {thm.improvedlsi}. The results in \cite{im13}
rely on mass transportation tools. The arguments here
are based on the standard semigroup
interpolation along the Ornstein-Uhlenbeck semigroup going back
to \cite{be85} (cf.~\cite{b94,bgl14}), together with
heat kernel inequalities as developed in~\cite {bgl14} (to which we refer for the necessary
background).

\begin {proof} [Proof of Theorem \ref{thm.improvedlsi}]
Recall the Ornstein-Uhlenbeck semigroup ${(P_t)}_{t \geq 0}$ given on suitable
functions $ {g : \R^n \to \R}$ by
$$
P_t g(x) \, = \,  \int_{\R^n} g \big (e^{-t}x + \sqrt {1 - e^{-2t}} \, y \big) d \gamma (y),
  \quad t \geq 0, \, \, x \in \R^n.
$$
The Ornstein-Uhlenbeck semigroup ${(P_t)}_{t \geq 0}$
is invariant and symmetric with respect to $\gamma$
and, on smooth functions, $\nabla P_t g = e^{-t} P_t (\nabla g) $ (as vectors).
For each $t \geq 0$, set $ d\nu_t = P_t f d\gamma $. The classical de Brujin's formula
indicates that
\be \label {eq.brujin}
{\rm H}(\nu ) \, = \,  \int_0^\infty {\rm I}(\nu_t) dt.
\en
This identity follows from the fact that the Fisher information ${\rm I}(\nu_t)$ is
the time-derivative of the entropy along the Ornstein-Uhlenbeck flow.

In the first step of the argument, we show that for any
$t \geq 0$, $\nu_t$ satisfies a Poincar\'e inequality \eqref {eq.poincare} with constant
$$ 
\lambda _t \, = \,  \frac{1}{\lambda ^{-1}e^{-2t} + 1 - e^{-2t}} \, . 
$$
To prove this, consider a smooth function $g $ with
$$
\int_{\R^n} g \, d\nu_t \, = \, 
 \int_{\R^n} g \,  P_t f \, d\gamma \, = \,  \int_{\R^n} P_t g  \, d\nu \, = \, 0
$$
(by symmetry of $P_t$).
First, by the local Poincar\'e inequalities for ${(P_t)}_{t \geq 0}$ (cf.~\cite{bgl14}),
for every $ t \geq 0$,
$$ 
P_t(g ^2) \, \leq \, (P_t g)^2 + (1 - e^{-2t}) P_t \big (|\nabla g |^2 \big ) .
$$
Hence, 
$$
\int_{\R^n} g ^2 d\nu_t \, = \,   \int_{\R^n} P_t (g ^2) d\nu 
\, \leq \,  \int_{\R^n} (P_t g )^2 d\nu 
     + (1 - e^{-2t}) \int_{\R^n} P_t \big (|\nabla g |^2 \big ) d\nu .
$$
Then, by the Poincar\'e inequality applied to $P_t g $, since $\int_{\R^n} P_t g d\nu = 0$,
\begin {equation*} \begin {split}
\int_{\R^n} g ^2 d\nu_t 
& \, \leq \,  \frac{1}{\lambda } \int_{\R^n} |\nabla P_t g |^2 d\nu
               + (1 - e^{-2t}) \int_{\R^n} P_t \big (|\nabla g |^2 \big )  d\nu \\\
&  \, \leq \,  \Big ( \frac{e^{-2t}}{\lambda } + 1 - e^{-2t} \Big)
     \int_{\R^n}  P_t \big ( |\nabla g |^2 \big ) d\nu \\
               &  \, \leq \,  \Big ( \frac{e^{-2t}}{\lambda } + 1 - e^{-2t} \Big)
     \int_{\R^n}   |\nabla g |^2  d\nu_t \\
\end {split} \end {equation*}
where we used the heat kernel inequality $ |\nabla P_t g |^2 \leq e^{-2t} P_t (|\nabla g |^2)$
and again the symmetry of $P_t$. The claim follows.

Towards the second step of the argument, recall that by integration by parts, for every $t >0$,
$$
{\rm I}(\nu_t) \,=\,  \int_{\R^n} \frac{|\nabla P_t f |^2}{P_t f} \, d \gamma 
     \, = \,  \int_{\R^n} P_t f |\nabla \log P_t f|^2 d\gamma  
     \, = \, \int_{\R^n} |\nabla \log P_t f|^2 d\nu_t .
$$
As is classical (cf.~\cite{b94,bgl14}),
\be \label {eq.derivativefisher}
\frac{d }{dt} \, {\rm I}(\nu_t) 
   \, = \,  - 2 \int_{\R^n} P_t f \, \Gamma_2( \log P_t f ) d\gamma  
   \, = \,  - 2 \int_{\R^n} \Gamma _2( \log P_t f) d\nu_t 
\en
where $ \Gamma_2(v) = |\Hess (v) |^2 + |\nabla v|^2$.

Since $\nu $ has a first moment, $|\nabla P_t f| \in {\rm L}^1(\gamma)$
for every $t >0$. Then, if $v_t = \log P_t f$, by the Gaussian integration by parts formula,
$$
\int_{\R^n} \! \nabla v_t \, d\nu_t \, = \, \int_{\R^n} \! \nabla P_t f \, d \gamma
   \, = \, \int_{\R^n} x  P_t f \, d \gamma.
$$
By symmetry,
$$
\int_{\R^n} x  P_t f \, d \gamma \, = \, \int_{\R^n} P_t x \, f \, d \gamma
     \, = \, e^{-t} \int_{\R^n} x f \, d \gamma = 0.
$$ 
Since $\nu_t$ satisfies a Poincar\'e inequality with constant $\lambda _t$,
applied to $v_t = \log P_t f$ for which therefore $\int_{\R^n} \nabla v_t d\nu_t = 0$,
$$
\lambda _t \int_{\R^n} |\nabla v_t|^2 d\nu_t 
   \,\leq \, \int_{\R^n} \big | \Hess (v_t) \big |^2 d \nu_t . 
$$
As a consequence,
$$ 
\frac{d}{dt} \, {\rm I}(\nu_t)  \,\leq \,  - 2 (\lambda_t + 1) \,  {\rm I}(\nu_t) . 
$$
Integrating this differential inequality, for every $ t \geq 0$,
$$ 
{\rm I}(\nu_t)  \, \leq \,  {\rm I}(\nu) \, e^{-4t} \, \frac{\lambda _t}{\lambda } \,  .
$$
Finally, by de Brujin's formula \eqref {eq.brujin}, the conclusion follows.
The proof of Theorem~\ref {thm.improvedlsi} is complete.
\end {proof}

\noindent We now turn to the proof of Theorem~\ref {thm.deficittal}, which is based
on mass transportation arguments.

\begin{proof}[Proof of Theorem \ref{thm.deficittal}]
The starting point is Cordero-Erausquin's mass transportation
proof of Talagrand's inequality \cite{ce02}.
Let $d\nu = f d\gamma$ be centered and 
$$
T \, = \,  (T_1, \ldots, T_n) : \R^n \, \to \,  \R^n
$$
be the Brenier map pushing $\gamma $ onto $\nu $.
It satisfies the Monge-Amp\`ere equation 
$$
e^{-|x|^2/2} \, = \,  f\big (T(x) \big ) e^{ -|T(x)|^2/2}  \det \big (\nabla T(x) \big ),
$$
 $d\gamma$-a.e. in the sense of Alexandrov \cite{M95, caf92}. 
Following \cite{ce02},
$$
{\rm H} (\nu) \, \geq \,
    \frac{1}{2}\, {\rm W}_2(\nu, \gamma)^2 
       + \int_{\R^n} \big [ \Delta \theta - \log \det \big ({\rm Id} + \Hess (\theta) \big ) \big] d\gamma
$$
where $\nabla \theta (x) = T(x) - x$.
Since the Laplacian is the sum of the eigenvalues of the Hessian, 
and since by the Brenier theorem $T$ is given by the gradient $\nabla \phi$
of a convex function $\phi : \R^n \to \R$
(cf.~\cite {v03,v09}), denoting by $\lambda_1,\ldots, \lambda_n$
the non-negative eigenvalues of $\nabla T$, we have
$$
\delta_{\rm Tal}(\nu)
  \, \geq \, \int_{\R^n} \sum_{i=1}^n   [ \lambda_i - 1 - \log \lambda_i  ] \,  d\gamma
  \, \geq \, \frac {1}{6} \int_{\R^n} \sum_{i=1}^n  \min \big ( | \lambda_i - 1|^2,
      |\lambda_i - 1| \big ) d\gamma.
$$
Let $I  = \{ 1 \leq i \leq n \, ;  |\lambda_i - 1 | \leq 1\}$. Then
\begin {equation*} \begin {split}
\sum_{i=1}^n  \min \big ( | \lambda_i - 1|^2,
      |\lambda_i - 1| \big )
   & \, = \,  \sum_{i \in I} |\lambda_i  - 1 |^2 
         +  \sum_{i \in I^c} |\lambda_i  - 1 | \\
      & \, \geq \,  \sum_{i \in I} |\lambda_i  - 1 |^2 
         +   \sqrt {\sum_{i \in I^c} |\lambda_i  - 1 |^2 }. \\
\end {split} \end {equation*}
Hence
\begin {equation*} \begin {split}
\delta_{\rm Tal} (\nu) 
     & \, \geq \, \frac {1}{6} \int \sum_{i \in I}  |\lambda_i  - 1 |^2  d\gamma 
        + \frac {1}{6} \int \sqrt { \sum_{i \in I^c}  |\lambda_i  - 1 |^2 } \, d\gamma  \\
          & \, \geq \, \frac {1}{6} \Bigg (\int \sqrt {\sum_{i \in I}  |\lambda_i  - 1 |^2 }
             \,  d \gamma \Bigg)^2
        + \frac {1}{6} \int \sqrt {\sum_{i \in I^c}  |\lambda_i - 1 |^2 } \, d\gamma  \\
\end {split} \end {equation*}
by Jensen's inequality. Assuming that $ \delta_{\rm Tal} (\nu)  \leq \alpha$ for some $\alpha >0$, 
$$
\delta_{\rm Tal} (\nu)  \, \geq \, 
  \frac {1}{6} \Bigg ( \int \sqrt { \sum_{i \in I}  |\lambda_i  - 1 |^2 }
              \,   d\gamma  \Bigg)^2
        + \frac {1}{36 \alpha } 
          \Bigg ( \int \sqrt { \sum_{i \in I^c}  |\lambda_i  - 1 |^2 } \, d\gamma  \Bigg )^2 .
$$
Then
\begin {equation*} \begin {split}
\delta_{\rm Tal} (\nu) 
          & \, \geq \, \frac {1}{72 \max (\alpha, 1)} \Bigg ( \int \sqrt {\sum_{i \in I}  |\lambda_i (x) - 1 |^2 }
                \, d\gamma 
   +  \int \sqrt {\sum_{i \in I^c}  |\lambda_i  - 1 |^2 } \,  d\gamma  \Bigg )^2 \\
    & \, \geq \, \frac {1}{72 \max (\alpha , 1)} \Bigg ( \int \sqrt { \sum_{i=1}^n  |\lambda_i  - 1 |^2 }
                \, d\gamma  \Bigg )^2 . \\
\end {split} \end {equation*}
Now, by the Cauchy-Schwarz inequality,
$$
\sqrt { \sum_{i=1}^n  |\lambda_i  - 1 |^2} \, = \,
   \sqrt { \sum_{i,j=1}^n \big |{(\nabla T)}_{ij} - \delta_{ij} \big |^2 }
        \, \geq \, \frac {1}{\sqrt {n}} \sum_{i=1}^n \big | \nabla (T_i - x_i) \big | .
$$ 
The characterization $T=\nabla \phi$, where $\phi:\R^n \to \R$ is convex,
implies that $\phi$ is an Alexandrov solution to 
$$
\det \big ({\rm Hess} (\phi) \big ) \, = \,  \frac{e^{-|x|^2/2}}{f(T(x)) e^{ -|T(x)|^2/2}} \, .
$$
Since $f>0$ and $T$ are locally bounded, the right-hand side is bounded away
from zero and infinity on every compact set. In particular, $\phi$ is $W^{2,1}$ \cite{M13}
(see also Remark~\ref{rmk.caf} below). 
The $(1,1)$-Poincar\'e inequality \eqref {eq.poincare11} holds for mean zero
$W^{1,1}$ functions. Observing that $\int_{\R^n} [T_i(x) - x_i ] d\gamma = 0$, $i = 1, \ldots, n$,
we thus obtain that
$$
\delta_{\rm Tal}(\nu)  \, \geq \, \frac{1}{288n \max (\alpha, 1)}\bigg (\int_{\R^n}
     \sum_{i=1}^n  | T_i - x_i | d \gamma \bigg )^2 \, \geq  \, 
        \frac{1}{288n \max (\alpha , 1)}\, {\rm W}_{1,1} (\nu, \gamma)^2.
$$
As a result, for every $\alpha >0$,
$$
\delta_{\rm Tal}(\nu)  \, \geq \, \min \bigg (
        \frac{{\rm W}_{1,1} (\nu, \gamma)^2}{288n \max (\alpha , 1)} \, , \alpha \bigg).
$$
Optimizing in $\alpha >0$ concludes the proof of Theorem~\ref{thm.deficittal}.
\end{proof}

\begin{remark} \label{rmk.caf}
In the proof of Theorem~\ref {thm.deficittal}, \cite{M13} was employed to deduce
$ W^{2,1}$-regularity of the potential function $\phi$. In our framework,
one may also infer the regularity in a different way. Indeed, from \cite{caf90} it follows that if $\phi$ is not strictly convex at a point, then it is affine on a line.
Since $\phi$ is globally convex, this implies that it only depends on $(n-1)$ variables.
In particular, $\nabla \phi(\mathbb{R}^n)$ is contained in an $(n-1)$-dimensional subspace,
and this contradicts that $\nabla \phi$ pushes $d\gamma$ onto $fd\gamma$.
Hence, $\phi$ is strictly convex on $\R^n$, and the desired regularity follows from \cite{df13}.
\end{remark}

\section{One dimensional estimates via mass transfer} \label{S3}

The proof of Theorem~\ref {thm.improvedlsi} relies on heat kernel theory. 
In this section, we establish an ${\rm L}^1$ estimate via mass transfer theory
for measures satisfying a $(1,1)$-Poincar\'e inequality on the real line
\be \label {eq.poincare11nu}
\lambda \int_{\R} |g | \, d\nu  \, \leq \,  \int_{\R} |\nabla g| \, d\nu
\en
for some $\lambda >0$ and every smooth mean zero $g : \R \to \R$.
Sufficient conditions to guarantee the (1,1)-Poincar\'e are given in \cite{bbcg08}
(see e.g.~Theorem 1.5 there). 
In general, the ${\rm L}^1$ Poincar\'e is stronger than the standard ${\rm L}^2$ inequality
\eqref {eq.poincare}, which makes Theorem \ref{thm.poincare11} below weaker than
Theorem~\ref {thm.improvedlsi}. However, the emphasis here is on the method of proof.  

\begin{theorem} \label{thm.poincare11} 
Let $d\nu = f d\gamma $ be a probability measure on $\R$ with barycenter $b = b(\nu)$
satisfying a $(1,1)$-Poincar\'e inequality with constant $\lambda >0$. Then there exists
${\tilde c} = {\tilde c}(\lambda)>0$ such that if $\delta_{\rm LSI} (\nu) \le 1$, 
$$ 
\delta_{\rm LSI}(\nu ) \, \geq \,  {\tilde c} \, \bigg ( \int_\R \big |(\log f)'-b \big | d\nu \bigg)^2 .
$$
\end{theorem}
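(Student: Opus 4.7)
The plan is to exploit a one-dimensional mass transport representation of $\delta_{\rm LSI}(\nu)$ and combine it with the $(1,1)$-Poincar\'e hypothesis applied to a carefully chosen mean-zero function under $\nu$. Let $T:\R \to \R$ be the monotone increasing rearrangement with $T_{\#}\gamma = \nu$ and let $S = T^{-1}$, so that $S_{\#}\nu = \gamma$ and the Monge--Amp\`ere equation in dimension one reads $\log f(y) = (y^2 - S(y)^2)/2 + \log S'(y)$. Differentiating yields $(\log f)'(y) = y - S(y) S'(y) + S''(y)/S'(y)$.

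The first step is to derive, via the one-dimensional analogue of Cordero-Erausquin's transport proof of the logarithmic Sobolev inequality (expanding the square inside ${\rm I}(\nu)$ and carrying out two rounds of Gaussian integration by parts that cancel the cross terms against $2{\rm H}(\nu)$), the exact identity
$$
\delta_{\rm LSI}(\nu) \, = \, \frac{1}{2} \int_\R \Bigl[ h\bigl(S'(y)\bigr) + \bigl(S''(y)/S'(y)\bigr)^2 \Bigr] d\nu(y), \qquad h(s) \, = \, s^2 - 2\log s - 1.
$$
Since $h$ is convex on $(0,\infty)$ with $h(1) = h'(1) = 0$ and $h''(s) = 2 + 2/s^2 \ge 2$, Taylor's formula at $s = 1$ gives $h(s) \ge (s-1)^2$, whence
$$
\int_\R (S' - 1)^2 \, d\nu \, \le \, 2\, \delta_{\rm LSI}(\nu)
\quad \text{and} \quad
\int_\R (S''/S')^2 \, d\nu \, \le \, 2\, \delta_{\rm LSI}(\nu).
$$

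The second step uses the decomposition
$$
(\log f)'(y) - b \, = \, \bigl[ y - S(y) - b \bigr] - S(y) \bigl[ S'(y) - 1 \bigr] + \frac{S''(y)}{S'(y)}.
$$
Since $\int S \, d\nu = \int z \, d\gamma = 0$ and $\int y \, d\nu = b$, the bracket $y - S(y) - b$ has mean zero under $\nu$, and the $(1,1)$-Poincar\'e inequality \eqref{eq.poincare11nu} followed by Cauchy--Schwarz gives
$$
\lambda \int_\R |y - S - b| \, d\nu \, \le \, \int_\R |1 - S'| \, d\nu \, \le \, \sqrt{2\, \delta_{\rm LSI}(\nu)}.
$$
The other two terms are controlled directly by Cauchy--Schwarz, using $\int S^2 \, d\nu = \int z^2 \, d\gamma = 1$:
$$
\int_\R |S|\,|S' - 1| \, d\nu \, \le \, \sqrt{2\, \delta_{\rm LSI}(\nu)},
\qquad
\int_\R |S''/S'| \, d\nu \, \le \, \sqrt{2\, \delta_{\rm LSI}(\nu)}.
$$
Summing via the triangle inequality then yields $\int |(\log f)' - b| \, d\nu \le (1/\lambda + 2) \sqrt{2 \delta_{\rm LSI}(\nu)}$, which rearranges to the desired estimate with $\tilde c(\lambda) = [2(1/\lambda + 2)^2]^{-1}$.

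The main obstacle is the derivation of the mass transport identity in the first step: the cross-terms coming from expanding $(y - SS' + S''/S')^2$ and combining with $-2{\rm H}(\nu) = - \int x^2 \, d\nu + 1 - 2\int \log S' \, d\nu$ must be cancelled via Gaussian integration by parts using $\int u' \, d\gamma = \int x\, u \, d\gamma$, producing the compact non-negative integrand $h(S') + (S''/S')^2$. Once this identity is established, the remaining steps consist only of the triangle inequality, Cauchy--Schwarz, and a single application of the $(1,1)$-Poincar\'e hypothesis. The cap $\delta_{\rm LSI}(\nu) \le 1$ serves only to ensure enough regularity of $S$ for these manipulations to be classical; the argument in fact delivers the inequality without the cap.
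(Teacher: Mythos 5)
Your proof is correct, and it follows a genuinely different path from the paper's. Let me first note the one spot you flag as the crux: the claimed exact identity
$$
2\,\delta_{\rm LSI}(\nu) \, = \, \int_\R \Bigl[ h(S') + (S''/S')^2 \Bigr] d\nu, \qquad h(s) = s^2 - 2\log s - 1,
$$
is indeed valid (modulo the usual approximation to justify Gaussian integration by parts). Writing $u = SS' - S''/S' = y - (\log f)'$, the Monge--Amp\`ere relation gives $\int g\, u\, d\nu = \int g'\, d\nu$; applying this with $g = y$ yields $\int y\,u\, d\nu = 1$, and applying it with $g = SS'$ yields $\int (SS')^2 d\nu = \int (S')^2 d\nu + 2\int SS'' \, d\nu$, whence $\int u^2 d\nu = \int (S')^2 d\nu + \int (S''/S')^2 d\nu$. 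Combined with $\int S^2 d\nu = 1$ and the expansion of ${\rm I}(\nu) - 2{\rm H}(\nu)$, this gives the identity. A sanity check with $\nu = N(m, 1/\sigma)$ (so $S(y) = \sqrt\sigma(y-m)$, $2\delta = \sigma - 1 - \log\sigma = h(\sqrt\sigma)$) confirms it.

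The paper instead imports from Cordero-Erausquin the two inequalities \eqref{eq.cordero1}--\eqref{eq.cordero2}, uses the $\widetilde\varphi$-function \eqref{eq.tildephi} together with Jensen's inequality to convert \eqref{eq.cordero2} into a quadratic lower bound on $\int|T'-1|\,d\nu$, and needs the cap $\delta_{\rm LSI}(\nu) \le 1$ precisely to make the quadratic regime of $\widetilde\varphi$ available (not, as you suggest, for regularity of $S$). Your route replaces both of those ingredients with the single identity above: the pointwise bound $h(s) \ge (s-1)^2$ (Taylor at $s=1$ with $h'' \ge 2$) directly gives $\int(S'-1)^2 d\nu \le 2\delta$ and $\int(S''/S')^2 d\nu \le 2\delta$, which is strictly sharper than what \eqref{eq.cordero2} plus the $\widetilde\varphi$-trick yield and dispenses with the cap. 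Your three-term decomposition $(\log f)'-b = [y-S-b] - S(S'-1) + S''/S'$ is compatible with the paper's two-term one (the paper's first piece $(\log f)'-(y-S)$ equals $-S(S'-1)+S''/S'$); you estimate the two sub-pieces separately by Cauchy--Schwarz using $\int S^2 d\nu = 1$, the paper estimates their $\ell^2$ sum in one stroke via \eqref{eq.cordero1}. The $(1,1)$-Poincar\'e step on the mean-zero quantity $y-S-b$ is identical. Net effect: a cleaner, cap-free proof with an explicit constant $\tilde c = [2(\lambda^{-1}+2)^2]^{-1}$, at the cost of carrying out the transport computation exactly rather than as an inequality. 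Both arguments share the same approximation caveat for the integration by parts, which the paper only spells out in the proof of the subsequent Theorem~3.4.
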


\begin{proof}
Let $T$ be the optimal transport map between $d\nu = fd\gamma$ and
$d\gamma$. Note that $T=G^{-1} \circ F$, where $F$ and $G$ are the
cumulative distribution functions of $d\nu$ and $d\gamma$, respectively.
In particular,
$$
T'(x) \, = \,  \frac{f(x)e^{-|x|^2/2}}{e^{-|T(x)|^2/2}} \, .
$$  
From Cordero-Erausquin's mass transportation proof of the logarithmic Sobolev inequality
\cite{ce02}, we extract the estimates
\begin{equation} \label{eq.cordero1}
 2\delta_{\rm LSI}(\nu) \, \geq \, \int_\R \big | T-x+(\log f)' \big|^2 d\nu (x) 
 \end{equation}
and
\begin{equation} \label{eq.cordero2}
\delta_{\rm LSI}(\nu) \, \geq \, 
    \int_\R \big [ T'-1 - \log \big (1+(T'-1) \big ) \big ] d\nu(x)  
\end{equation}
where $T$ is the optimal transport map between $d\nu = fd\gamma$ and $d\gamma$. Recall $\varphi : (-1, \infty) \to \R$ defined by $\varphi (t) = t - \log (1+ t)$ and set
\be  \label {eq.tildephi} 
   \widetilde {\varphi} (t) \, = \,  \left\{
     \begin{array}{lr}
      \frac{t^2}{6} \, , & \hskip .05in -1\leq t \leq 1,\\
       \varphi (t) - \frac{5}{6} + \log 2, \hskip .05in & \hskip 0.05in t \geq 1.
     \end{array}
   \right.
\en
Note that $\widetilde {\varphi}(t)=  \widetilde {\varphi}(|t|)$ is convex and
$\varphi (t)\geq  \frac{1}{10}\widetilde {\varphi}(t)$. By \eqref {eq.cordero2},
Jensen's inequality and the fact that $T'\ge 0$, we obtain
\be\label {eq.psi}
\delta_{\rm LSI}(\nu) \, \geq  \, \frac{1}{10}\int_\R \widetilde {\varphi} \big (|T'-1| \big ) d\nu 
  \, \geq \, \frac{1}{10}\widetilde {\varphi}\bigg (\int_\R  |T'-1| \, d\nu \bigg). 
\en
Since it is asumed that $\delta_{\rm LSI}(\nu) \leq 1$, it follows from
the properties of $\widetilde {\varphi}$ that
$$ 
\widetilde {\varphi} \bigg(\int_\R |T'-1| d\nu \bigg) \,  \geq \,  c\bigg(\int_\R |T'-1| \, d\nu \bigg)^2
$$
for a universal $c>0$. Hence
\be \label {eq.psicsq}
\delta_{\rm LSI}(\nu)   \,  \geq \,  c\bigg(\int_\R |T'-1| \, d\nu \bigg)^2 .
\en 
By the push-forward condition, 
$\int_\R (x-T) d\nu = b-\int _\R T d\nu = b$.
Thus, combining this information with \eqref{eq.cordero1}, the Cauchy-Schwarz inequality
and the $(1,1)$-Poincar\'e inequality \eqref {eq.poincare11nu},
\begin {equation*} \begin {split}
\int_\R \big |(\log f)' - b \big | d\nu
  & \, \le \,  \int_\R \big  |(\log f)'-(x-T) \big | d\nu  + \int_\R \big |(x-T)-b \big | d\nu \\
  & \, \le \,  \sqrt {2\delta_{\rm LSI} (\nu)} + \frac {1}{\lambda} \int_\R |T' - 1| \, d\nu . \\
\end {split} \end {equation*}
Together with \eqref {eq.psicsq}, the claim is easily completed.
\end{proof}

The next corollary is achieved as Corollary~\ref {cor.stabilityw2centered}.

\begin{corollary} \label{cor.poincare11}
Let $d\nu = f d\gamma $ be a centered probability measure on $\R$
satisfying a $(1,1)$-Poincar\'e inequality with constant $\lambda >0$. Then there exists
${\tilde c} = {\tilde c}(\lambda)>0$ such that if $\delta_{\rm LSI} (\nu) \le 1$, 
$$
\delta_{\rm LSI}(\nu) \, \ge \, {\tilde c} \, {||\nu - \gamma||}_{\rm TV}^2.
$$
\end{corollary}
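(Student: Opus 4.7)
The plan is to combine Theorem~\ref{thm.poincare11} with the standard $(1,1)$-Poincar\'e inequality \eqref{eq.poincare11} for $\gamma$, in the same spirit as Corollary~\ref{cor.tv} was derived from Theorem~\ref{thm.improvedlsi}. Since $\nu$ is assumed centered, its barycenter is $b=0$, so Theorem~\ref{thm.poincare11} applied directly yields
$$
\delta_{\rm LSI}(\nu) \, \geq \, \tilde c \bigg ( \int_\R \big |(\log f)' \big | d\nu \bigg)^2
$$
whenever $\delta_{\rm LSI}(\nu) \leq 1$, with $\tilde c = \tilde c(\lambda) > 0$.

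Next, I would rewrite the right-hand side in terms of the density $f$ with respect to~$\gamma$. Since $d\nu = f d\gamma$,
$$
\int_\R \big |(\log f)' \big | d\nu \, = \, \int_\R \bigg | \frac{f'}{f} \bigg | \, f \, d\gamma
\, = \, \int_\R |f'| \, d\gamma.
$$
Now $g = f-1$ is smooth and has mean zero under $\gamma$, so the Gaussian $(1,1)$-Poincar\'e inequality \eqref{eq.poincare11} applies and gives
$$
\| \nu - \gamma \|_{\rm TV} \, = \, \int_\R |f-1| \, d\gamma \, \leq \, 2 \int_\R |f'| \, d\gamma.
$$
Combining the two displays produces
$$
\delta_{\rm LSI}(\nu) \, \geq \, \tilde c \bigg ( \frac{1}{2} \, \| \nu - \gamma \|_{\rm TV} \bigg )^2
\, = \, \frac{\tilde c}{4} \, \| \nu - \gamma \|_{\rm TV}^2,
$$
and the claim follows after relabelling the constant.

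There is essentially no obstacle here: once Theorem~\ref{thm.poincare11} is in hand, the rest is a two-line manipulation, with the only subtle point being that the centering hypothesis $b=0$ lets one drop the $-b$ term and that the $(1,1)$-Poincar\'e inequality for $\gamma$ is a universal fact (no dependence on $\lambda$). The hypothesis $\delta_{\rm LSI}(\nu) \leq 1$ propagates directly from Theorem~\ref{thm.poincare11}; outside this regime the conclusion is trivially true by adjusting constants, so no additional case analysis is required.
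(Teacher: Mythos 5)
Your proof is correct and is the natural completion of the paper's one-sentence justification (``achieved as Corollary~\ref{cor.stabilityw2centered}''), which in the centered case $b=0$ reduces to applying Theorem~\ref{thm.poincare11} directly and then converting the ${\rm L}^1$ Fisher-type quantity into total variation. Your two key manipulations — rewriting $\int|(\log f)'|\,d\nu=\int|f'|\,d\gamma$ and then invoking the Gaussian $(1,1)$-Poincar\'e inequality \eqref{eq.poincare11} on $g=f-1$ — are exactly the steps the paper leaves implicit, and together with the hypothesis $\delta_{\rm LSI}(\nu)\le 1$ they yield the claimed bound with constant $\tilde c/4$.
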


As already mentioned, since Theorem~\ref{thm.improvedlsi} cannot hold for all probability measures,
one may not hope to generalize Corollary~\ref {cor.stabilityw2centered}
by enlarging the function space. However, this does not prevent the weaker estimates in
Theorem~\ref{thm.poincare11} and Corollary~\ref{cor.poincare11} from being true in general.
If these estimates held in full generality, without the assumption that $\nu$ satisfies
some Poincar\'e inequality, then they would automatically recover the equality
cases of the Gaussian logarithmic Sobolev inequality.

\medskip

We conclude this section by proving a version of
Corollaries~\ref {cor.stabilityw2centered} and \ref {cor.stabilityw2}
on the real line for probability measures satisfying a second moment bound
(without assuming a Poincar\'e inequality). The proof is again based on mass transfer. 
Recall the function $\tilde \varphi $ \eqref {eq.tildephi}
from the proof of Theorem~\ref {thm.poincare11}.

\begin{theorem}
Let $d\nu = f d\gamma $ be a probability measure on $\R$ with barycenter $b = b(\nu) $ such that
${\Var_\nu(x) \leq 1}$. Then, for some $C>0$,
$$
\delta_{\rm LSI} (\nu)
  \,  \geq \,  \widetilde {\varphi} \bigg ( C\int_\R \big |(\log f)' - b \big |^2 d\nu \bigg).
$$
In particular, for some numerical $c >0$,
$$
\delta_{\rm LSI} (\nu) \, \geq \, c \, {\rm W}_2(\nu, \gamma_b)^4
$$
where $\gamma_b$ is given in \eqref {eq.extremal}.
\end{theorem}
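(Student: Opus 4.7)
The plan is to mirror the one-dimensional $L^1$ strategy of Theorem~\ref{thm.poincare11} at the $L^2(\nu)$-level, with the hypothesis $\Var_\nu(x) \leq 1$ playing the role previously occupied by the $(1,1)$-Poincar\'e inequality. Let $T$ denote the monotone Brenier map pushing $\nu$ onto $\gamma$. The first step is to expand the Cordero-Erausquin quadratic estimate \eqref{eq.cordero1} via Gaussian integration by parts. Using the pushforward identities $\int T \, d\nu = 0$ and $\int T^2 \, d\nu = 1$, together with $\int x \, d\nu = b$ and $\int x^2 \, d\nu = b^2 + \sigma^2$ where $\sigma^2 = \Var_\nu(x)$, a direct computation gives
$$\int_\R \big(T - x + (\log f)'\big)^2 d\nu \,=\, {\rm I}(\nu) + 1 - b^2 - \sigma^2 - 2\int_\R (T' - 1)\, d\nu,$$
so that \eqref{eq.cordero1} combined with $\sigma^2 \leq 1$ (which discards the non-positive slack $\sigma^2 - 1$) yields ${\rm I}(\nu) - b^2 \leq 2\delta_{\rm LSI}(\nu) + 2 \int_\R |T' - 1|\, d\nu$.

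The second step recycles the estimate $\int_\R |T' - 1|\, d\nu \leq \widetilde{\varphi}^{-1}(10\,\delta_{\rm LSI}(\nu))$ already extracted in the proof of Theorem~\ref{thm.poincare11} from the companion bound \eqref{eq.cordero2}, the pointwise inequality $\varphi \geq \widetilde{\varphi}/10$, and Jensen's inequality for the convex function $\widetilde{\varphi}$. Substituting yields
$${\rm I}(\nu) - b^2 \,\leq\, 2\delta_{\rm LSI}(\nu) + 2\widetilde{\varphi}^{-1}\!\big(10\,\delta_{\rm LSI}(\nu)\big),$$
which I would invert using the two-regime asymptotics $\widetilde{\varphi}^{-1}(s) \lesssim \sqrt{s}$ for $s$ small and $\widetilde{\varphi}^{-1}(s) \lesssim s$ for $s$ large to exhibit a sufficiently small universal constant $C > 0$ satisfying $\widetilde{\varphi}(C({\rm I}(\nu) - b^2)) \leq \delta_{\rm LSI}(\nu)$. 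Since Gaussian integration by parts furnishes $\int_\R (\log f)'\, d\nu = b$ and thus $\int_\R |(\log f)' - b|^2 d\nu = {\rm I}(\nu) - b^2$, this is the stated main inequality.

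The ``in particular'' assertion follows by applying Talagrand's inequality and the Gaussian logarithmic Sobolev inequality with reference measure $\gamma_b$, yielding ${\rm W}_2(\nu, \gamma_b)^2 \leq 2 {\rm H}(\nu \, | \, \gamma_b) \leq {\rm I}(\nu) - b^2$. Substitution gives $\delta_{\rm LSI}(\nu) \geq \widetilde{\varphi}(C\,{\rm W}_2(\nu, \gamma_b)^2)$, which is $\gtrsim {\rm W}_2(\nu,\gamma_b)^4$ in the small-${\rm W}_2$ regime via $\widetilde{\varphi}(s) = s^2/6$; in the complementary bounded-${\rm W}_2$ range, the variance hypothesis together with FKG monotonicity ($\cov_\nu(x, T) \geq 0$) forces ${\rm W}_2(\nu, \gamma_b)^2 \leq 1 + \sigma^2 \leq 2$, so the conclusion can be absorbed into the universal constant. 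The chief technical obstacle lies in bookkeeping the constants through the expansion and through the two asymptotic regimes of $\widetilde{\varphi}^{-1}$; the conceptual subtlety is that the slack $1 - b^2 - \sigma^2$ in the expanded quadratic identity carries the right sign precisely when $\Var_\nu(x) \leq 1$.
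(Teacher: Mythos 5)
Your proposal is correct and follows essentially the same route as the paper's own proof: you expand the quadratic Cordero-Erausquin estimate \eqref{eq.cordero1} via Gaussian integration by parts to obtain the identity relating $\int|(\log f)'-b|^2 d\nu$, $\int(T'-1)\,d\nu$, and the slack $\Var_\nu(x)-1$, then control $\int|T'-1|\,d\nu$ by \eqref{eq.cordero2} through $\widetilde\varphi$, and finally invert using the two-regime behavior of $\widetilde\varphi$. The only cosmetic differences are that you phrase the key identity in terms of ${\rm I}(\nu)-b^2$ (equal to $\int|(\log f)'-b|^2 d\nu$ by Gaussian integration by parts, as you note) rather than expanding $|(\log f)'-b|^2$ directly, and in the final step you use the monotone-coupling covariance bound ${\rm W}_2(\nu,\gamma_b)^2\le 1+\sigma^2\le 2$ where the paper is content with the cruder ${\rm W}_2^2\le 2\Var_\nu+2\Var_{\gamma_b}\le 4$; either suffices to absorb the constant.
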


A multidimensional version of this result was proved in
\cite{bgrs13},  with a smoothness assumption on $f$. The proof there is based on a rescaling property of the LSI. 
The contribution here is an alternative technique of proof. It would be of
interest to see if the multidimensional version can be similarly obtained using transport arguments.

\begin{proof}
By approximation, it may be assumed that $f$ has compact support and
is smooth enough with derivative at least in ${\rm L}^1(\gamma)$.
Letting as above $T : \R \to \R$ be the increasing map pushing $\nu$ onto $\gamma$, we have
\begin {equation*} \begin {split}
\int_\R \big |(\log f)' - b \big |^2 d \nu
  & \,  = \, \int_\R \big |(\log f)' +(T - x) - (T - x) - b \big |^2d\nu \\
  & \, = \,  \int_\R \big |(\log f)' +(T - x) \big |^2 d\nu + b^2 - \int_\R |T- x|^2 d\nu  
    - 2\int_\R  (T - x +b) (\log f)' d\nu  \\
  & \, = \,  \int_\R \big |(\log f)' +(T - x)|^2 d\nu  +b^2 - \int_\R  |T - x|^2 d\nu   
- 2\int_\R (T - x)f' d\gamma - 2b\int_\R f'd\gamma . \\
\end {split} \end {equation*}
By Gaussian integration by parts, $\int_\R f'd\gamma = \int_\R x fd\gamma = b$ and 
similarly
$$
\int_\R (T - x)f'd\gamma \, = \,    \int_\R x(T - x) d\nu - \int_\R (T' - 1) d\nu .
$$
After some algebra, it follows that
$$
\int_\R \big |(\log f)' -  b \big |^2 d \nu 
   \, = \, \int_\R \big |(\log f)' +(T - x) \big |^2 d\nu + 2\int_\R(T' - 1) d\nu  + \Var_\nu (x) - 1. 
$$
Using \eqref{eq.cordero1} and \eqref {eq.psi}, we get that
$$
\int_\R \big |(\log f)' -  b \big |^2 d \nu 
  \, \leq \,  2\delta_{\rm LSI}(\nu) + 2\widetilde {\varphi}^{-1}\big (\delta_{\rm LSI}(\nu) \big )
   + \Var_\nu (x) - 1,
$$
where $\widetilde {\varphi}^{-1}$ is the inverse of $\widetilde \varphi$ on $\mathbb{R}^+$. Since $\widetilde {\varphi}^{-1}(x) \geq Cx$ for some $C > 0$, 
$$
\delta_{\rm LSI} (\nu)
  \,  \geq \,  \widetilde {\varphi} \bigg ( C\int_\R \big |(\log f)' - b \big |^2 d\nu \bigg).
$$
But $\int_\R |(\log f)' - b|^2 d\nu $ is the relative Fisher information of $\nu$ with
respect to the non-centered Gaussian $d\gamma_b = e^{b.x - b^2/2}d \gamma$
which satisfies a logarithmic Sobolev inequality with constant $\frac{1}{2}$. Therefore,
together with Talagrand's inequality \eqref {eq.talagrand},
$$
\int_\R \big |(\log f)' - b \big |^2 d\nu  \, \geq \,
       {\rm H} \big ( \nu \, | \, \gamma_b \big ) \, \geq \,  {\rm W}_2(\nu, \gamma_b)^2
$$
and hence
$$
\delta_{\rm LSI} (\nu) \, \geq \,  \widetilde {\varphi} \big (C \, {\rm W}_2(\nu, \gamma_b)^2 \big).
$$
By definition of the Wassertein distance ${\rm W}_2$,
$$
{\rm W}_2(\nu, \gamma_b)^2 \, \leq \, 2 \, \Var_\nu (x) + 2 \, \Var _{\gamma_b} (x) 
    \, \leq \, 4
$$
under the assumption$\Var_\nu (x) \leq 1$. 
Since $\widetilde {\varphi}$ behaves quadratically near the origin, it finally follows that
for some numerical $c >0$,
$$
\delta_{\rm LSI} (\nu) \, \geq \,  c \, {\rm W}_2(\nu, \gamma_b)^4.
$$
\end{proof}

\section{Applications} \label{S4}

\subsection{The Bakry-\'Emery theorem for symmetric measures in $\mathcal{P}(\lambda)$}
In what follows we describe an extension
of Theorem~\ref {thm.improvedlsi} to families of log-concave measures.
Let $d \mu = e^{-V} dx$ where $V : \R^n \to \R$ a smooth potential be a probability measure on $\R^n$
satisfying the convexity condition \eqref {eq.bakryemery}, that is
$\Hess ( V) \geq \eta \, {\rm Id}$ for some $\eta > 0$. The Gaussian case corresponds to the
quadratic potential $V (x) = \frac {|x|^2}{2}$ with $\eta = 1$.

Given a probability measure $ d\nu = f d\mu$ with density $f$ with respect to $\mu$,
the relative entropy and Fisher information with respect to $\mu$ are defined as in the Gaussian case by
$$
{\rm H} \big ( \nu \, | \, \mu \big ) \, = \, \int_{\R^n} f \log f \, d\mu
\qquad {\hbox {and}} \qquad 
 {\rm I} \big (\nu \, | \mu \big) \, = \,  \int_{\R^n} \frac{|\nabla f|^2}{f}\, d\mu,
$$
and the Bakry-\'Emery LSI (see \cite{be85,v03,v09,bgl14})
ensures that
$$
{\rm H} \big ( \nu \, | \, \mu \big )  \, \leq \,  \frac{1}{2\eta} \, {\rm I} \big ( \nu \, | \, \mu \big ).
$$
As for the Gaussian LSI, the proof relies on the 
semigroup ${(P_t^V)}_{t \geq 0}$ with infinitesimal generator
$\mathcal{L}^V = \Delta - \nabla V \cdot \nabla $ for which the analogues of
\eqref {eq.brujin} and \eqref {eq.derivativefisher} read, with $d\nu_t = P_t^V f d\mu$,
$$
{\rm H} \big ( \nu \, | \, \mu \big )  \,=\, \int_0^{\infty} {\rm I} \big ( \nu_t \, | \, \mu \big )  dt
$$
and 
$$
\frac{d}{dt} \, {\rm I} \big ( \nu_t \, | \, \mu \big )  \,= \, -2 \int_{\R^n} \Gamma_2(P^V_t \log f)d\nu_t
$$
where, this time, 
$$
\Gamma_2(v) \, = \, \big |\Hess  (v) \big |^2 + \langle \Hess (V) \nabla v, \nabla v \rangle
             \, \geq \,  \big |\Hess  (v) \big |^2 + \eta \, |\nabla v |^2.
$$

If we try to mimic the proof of Theorem  \ref{thm.improvedlsi} in this context,
it should be proved that
as soon as $\nu$ belongs to $\mathcal{P}(\lambda)$, $\nu_t$ belongs to $\mathcal{P}(\lambda_t)$
with
$$
\lambda_t \, = \,  \frac{1}{\lambda^{-1}e^{-2\eta t} + \eta^{-1}(1 - e^{-2\eta t})}
$$
(which is proved as in the Gaussian case), and that, whenever $\nu$ is centered,
$\int_{\R^n}\nabla v_t d\nu_t = 0$ for all $t \geq 0$ where $v_t = \log P^V_t f$.
The latter requirement is however not true in this general context. It can
nevertheless hold in some more restricted setting,
for example as soon as $V$ is even and $\nu$ is symmetric
(i.e. if $f$ is also even) in which case $\int_{\R^n} \nabla v_t d\nu_t = \int_{\R^n}\nabla V d\nu_t = 0$.

These observations lead to the following improvement of the Bakry-\'Emery theorem
for symmetric measures in $\mathcal{P}(\lambda)$. 

\begin{theorem} \label {thm.improvedbe}
Assume that $d\mu = e^{-V} dx$ is a symmetric probability measure 
such that $\Hess ( V) \geq \eta \, {\rm Id}$ for some $\eta > 0$,
and let $d\nu = f d\mu$ be a symmetric probability measure in the class $\mathcal{P}(\lambda)$
for some $\lambda > 0$. Then, for every $ t \geq 0$,
$$
{\rm I} \big ( \nu_t \, | \, \mu \big ) \, \leq \,  e^{-4\eta t} \, \frac{\lambda_t}{\lambda}
   \, {\rm I} \big ( \nu \, | \mu).
$$
Consequently, if $\lambda \neq \eta$,
$$
{\rm H} \big ( \nu \, | \mu) \, \leq \,
     \frac{\eta - \lambda -  \lambda(\ln \eta - \ln \lambda)}{2(\eta - \lambda)^2}
       \, {\rm I} \big ( \nu \, | \mu)
$$
and, if $\lambda = \eta$,
$$
{\rm H} \big ( \nu \, | \mu) \, \leq \,  \frac{1}{4\eta} \, {\rm I} \big ( \nu \, | \mu) . 
$$
\end{theorem}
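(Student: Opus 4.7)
The plan is to carry out the semigroup-$\Gamma_2$ argument used for Theorem~\ref{thm.improvedlsi}, now along ${(P_t^V)}_{t\ge 0}$, and to exploit symmetry to replace the centered-mean hypothesis used in the Gaussian case. Concretely, I will establish the differential inequality
\be \label{eq.gronwallV}
\frac{d}{dt} \, {\rm I} \big (\nu_t \, | \, \mu \big ) \, \leq \, -2(\lambda_t + \eta) \, {\rm I} \big ( \nu_t \, | \, \mu \big ),
\en
integrate it to obtain ${\rm I}(\nu_t \, | \, \mu) \leq e^{-4\eta t}(\lambda_t/\lambda) \, {\rm I}(\nu \, | \, \mu)$, and convert this via the de Bruijn-type identity ${\rm H}(\nu \, | \, \mu) = \int_0^\infty {\rm I}(\nu_t \, | \, \mu) \, dt$ into the stated entropy bounds.

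Two ingredients feed \eqref{eq.gronwallV}. The first, $\nu_t \in \mathcal{P}(\lambda_t)$, is the direct analogue of the Gaussian computation from the proof of Theorem~\ref{thm.improvedlsi}: under $\Hess(V) \geq \eta \, {\rm Id}$, the sub-commutation $|\nabla P_t^V g|^2 \leq e^{-2\eta t} P_t^V(|\nabla g|^2)$ and the local Poincar\'e inequalities for $(P_t^V)$ (both standard consequences of Bakry-\'Emery) play exactly the role of their Ornstein-Uhlenbeck counterparts. The second is the centering identity $\int \nabla v_t \, d\nu_t = 0$ for $v_t = \log P_t^V f$. Once centering is granted, applying the Poincar\'e inequality for $\nu_t$ coordinatewise to $\nabla v_t$ gives
$$
\lambda_t \int_{\R^n} |\nabla v_t|^2 \, d\nu_t \, \leq \, \int_{\R^n} \big |\Hess(v_t) \big |^2 d\nu_t,
$$
which combined with $\frac{d}{dt} {\rm I}(\nu_t \, | \, \mu) = -2\int \Gamma_2(v_t) d\nu_t$ and $\Gamma_2(v_t) \geq |\Hess(v_t)|^2 + \eta|\nabla v_t|^2$ yields \eqref{eq.gronwallV} at once.

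The delicate point, and the sole reason the symmetry assumption appears, is the centering identity. In the Gaussian case it reduced via integration by parts to $\int x f \, d\gamma \cdot e^{-t}=0$; for a general convex $V$ the analogous computation gives $\int \nabla P_t^V f \, d\mu = \int f \, P_t^V(\nabla V) \, d\mu$, for which no useful evaluation is available. I would therefore argue as suggested just before the theorem: since $V$ is even, the heat kernel of $P_t^V$ is invariant under the reflection $x \mapsto -x$, so $P_t^V$ preserves evenness; hence $P_t^V f$ and $v_t$ are even, $\nabla v_t$ is odd, and $d\nu_t$ is even, forcing $\int \nabla v_t \, d\nu_t = 0$. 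This is precisely the step at which an unconditional extension to arbitrary centered $\nu$ breaks down, and it is the main obstacle to remove if one wants to drop the symmetry hypothesis.

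For the pointwise-in-$t$ estimate, I will simply verify that $F(t) := e^{-4\eta t}\lambda_t/\lambda$ satisfies $F'/F = -2(\lambda_t + \eta)$; this is a short computation from $\lambda_t^{-1} = \lambda^{-1}e^{-2\eta t} + \eta^{-1}(1 - e^{-2\eta t})$, and combined with \eqref{eq.gronwallV} it delivers the first conclusion. Integrating against de Bruijn,
$$
{\rm H}(\nu \, | \, \mu) \, \leq \, {\rm I}(\nu \, | \, \mu) \int_0^\infty e^{-4\eta t} \, \frac{\lambda_t}{\lambda} \, dt,
$$
and the substitution $u = e^{-2\eta t}$ reduces the integral to $\int_0^1 \frac{u \, du}{2(\lambda + (\eta - \lambda)u)}$. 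A partial-fraction evaluation produces $\frac{\eta - \lambda - \lambda(\log \eta - \log \lambda)}{2(\eta - \lambda)^2}$ when $\lambda \neq \eta$, and $\frac{1}{4\eta}$ in the degenerate case $\lambda = \eta$ (either by direct integration or by taking a limit), matching the statement.
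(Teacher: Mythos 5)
Your proof is correct and follows exactly the route the paper itself sketches in the discussion preceding Theorem~\ref{thm.improvedbe}: propagate the Poincar\'e constant along $(P^V_t)$ using the Bakry-\'Emery local Poincar\'e and sub-commutation inequalities, obtain the centering of $\nabla v_t$ from evenness of $V$ and $f$ (which makes $P_t^V f$ even, $\nabla v_t$ odd, and $\nu_t$ symmetric), feed both into $\Gamma_2(v_t)\ge|\Hess v_t|^2+\eta|\nabla v_t|^2$ to get the Gronwall inequality $\frac{d}{dt}{\rm I}(\nu_t|\mu)\le -2(\lambda_t+\eta){\rm I}(\nu_t|\mu)$, and integrate via de Bruijn. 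The verification that $e^{-4\eta t}\lambda_t/\lambda$ solves the linear ODE and the evaluation of $\int_0^1\frac{u\,du}{2(\lambda+(\eta-\lambda)u)}$ are both correct, so there is nothing to add.
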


Note that this result is not a stability result, since the constant given
by the Bakry-\'Emery theorem is not optimal in general.
Theorem~\ref {thm.improvedbe} nevertheless yields improved estimates on the speed of convergence
to equilibrium for the semigroup, of interest for example in the context of Monte Carlo Markov Chain 
sampling of the measure $\mu$.

Similar estimates can obtained for measures which are given by bounded perturbations of uniformly convex potentials, using the Holley-Stroock approach. 
This includes the important example of the quartic double-well potential
$V(x) = (x^2 - 1)^2$ (which is used in statistical physics for continuous versions of the Ising model).

\subsection{Coherent state transform}

For $h>0$, let $d\mu_h$ denote $h^{-n}$ times the
Lebesgue measure on $\mathbb{C}^n$ viewed as $\mathbb{R}^{2n}$.
The coherent state transform is an integral transform mapping $({\rm L}^2(\R^n), dx)$
isometrically onto a subspace of $(\R^{2n}, d\mu_h)$ and given explicitly by  
$$
\psi \, \mapsto \,  \mathcal{L} \psi(p,q) \, = \,
     e^{ip \cdot q/2h^*} \int_{\R^n} e^{i p \cdot x/h^*} e^{-|x-y|^2/2h^*} \psi(x) dx
$$
with $h^*= \frac {h}{2\pi}$. The map
$\mathcal{L}$ is built out of Weyl's representation of the Heisenberg
group and has applications in quantum mechanics, where $|\mathcal{L} \psi|^2$
is interpreted as the phase space density in the state $\psi$.
Bounds on $|\mathcal{L} \psi|^2$ are useful in estimating, e.g., the ground state energy
of a Schr\"odinger operator (see~\cite{li81,c91}).

The concentration of a density $\rho$ can be measured via the entropy functional $S$ defined by 
$$
{\rm S}(\rho) \, = \, -\int_{\R^{2n}} \rho \log\rho \,  d \mu_h .
$$
Note that this is the \textit{physical} entropy, which is the negative of the mathematical entropy.
Wehrl \cite{w79} conjectured $n$ to be a lower bound on the entropy
of phase space densities induced by $\mathcal{L}$ acting on $({\rm L}^2(\mathbb{R}^n), dx)$,
that is
$$
{\rm S}(\rho) \, \ge \,  n
$$
whenever $\rho=|\mathcal{L} \psi|^2$ and $\psi \in ({\rm L}^2(\R^n), dx)$.
Lieb \cite{li78} established this inequality with a method based on the sharp
Young and Haussdorf-Young inequalities. Carlen \cite{c91} recovered Lieb's result
via an approach based on the logarithmic Sobolev inequality and also settled the
problem of characterizing the cases of equality.    

In what follows we apply our results from the previous sections
to show that in some configurations,
one can obtain positive lower bounds on the {\it Wehrl deficit}
$$
\delta_{\rm Wehrl} (\rho) \, = \, {\rm S}(\rho) - n
$$
in terms of well-known metrics. The method of proof is based on Carlen's approach.

\begin{theorem} \label {thm.wehrl}
Suppose $\rho=|\mathcal{L} \psi|^2$ is a probability density on $(\R^{2n}, d\mu_h)$
with barycenter $b=b_\rho \in \R^{2n}$. Let 
$$
d\nu_\rho (z) \, = \, e^{\frac{|z|^2}{2}} \rho \bigg(\sqrt{\frac{h}{2\pi}} \, z\bigg) d\gamma(z),
$$ 
$$
d  \nu_{\rho,b} (z) 
     \, = \, e^{\frac{|z|^2}{2}} \rho \bigg(\sqrt{\frac{h}{2\pi}} \, z+b\bigg) d\gamma(z)
$$
where $\gamma $ is the standard Gaussian measure on $\R^{2n}$.
There exists $c>0$ such that if $\rho$ is not identically $e^{-\frac{\pi}{h}|z|^2}$,
then
$$ 
\delta_{\rm Wehrl} (\rho) \, \ge \,  \frac{c}{ {\rm H}(\nu_\rho)}
    \, \min \bigg ( \frac {{\rm W}_{1,1} ( \nu_{\rho,b}, \gamma ) ^4}{n^2} \, ,
       \frac {{\rm W}_{1,1} (\nu_{\rho,b}, \gamma )^2 }{ n} \bigg ).
$$ 
Moreover, in the class of probability densities $\rho$ with finite second moments,
$\delta_{\rm Wehrl}(\rho)=0$ exactly when $\rho = e^{-\frac{\pi}{h}|z-z_0|^2}$
for some $z_0 \in \mathbb{R}^{2n}$ or alternatively, when
$\psi_{p_0,q_0}(x)=e^{ip_0\cdot x} \phi_0(x-q_0)$ for some ${(p_0,q_0) \in \R^{2n}}$
and $\phi_0(x)=\big(\frac{2}{h}\big)^{\frac{n}{2}} e^{-\frac{|x|^2}{2}}$.   
\end{theorem}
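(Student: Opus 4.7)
The plan is to reduce the estimate to Corollary~\ref{cor.deficit1} applied to $\nu_\rho$ via Carlen's identity between the Wehrl deficit and the logarithmic Sobolev deficit:
$$ \delta_{\rm Wehrl}(\rho) \,=\, \delta_{\rm LSI}(\nu_\rho), $$
established in~\cite{c91}. I would first recall its derivation. The Bargmann--Fock representation writes $\rho(y)=e^{-\pi|y|^2/h}|G(y)|^2$ on $\R^{2n}\cong\C^n$ with $G$ entire, so that after the rescaling $y=\sqrt{h^*}\,z$ the density $f$ of $\nu_\rho$ with respect to $\gamma$ factors as $f(z)=e^{-|z|^2/2}|\tilde G(z)|^2$ with $\log|\tilde G|^2$ pluriharmonic. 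Two rounds of Gaussian integration by parts, using $\Delta\log|\tilde G|^2=0$, then reduce $\tfrac12{\rm I}(\nu_\rho)-{\rm H}(\nu_\rho)$ to $-{\rm H}(\nu_\rho)+\tfrac12\int|z|^2 d\nu_\rho - n$, which equals ${\rm S}(\rho)-n$ after reversing the change of variables.

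Next, a short change-of-variable computation shows that if $b$ is the barycenter of $\rho$ on $(\R^{2n},d\mu_h)$, then the barycenter of $\nu_\rho$ on $\R^{2n}$ equals $\tilde b = b/\sqrt{h^*}$, and the recentering formula \eqref{eq.nub} applied to $\nu_\rho$ with this $\tilde b$ yields exactly the density $e^{|z|^2/2}\rho(\sqrt{h^*}\,z+b)$; that is, $(\nu_\rho)_{\tilde b}=\nu_{\rho,b}$. Applying Corollary~\ref{cor.deficit1} to $\nu_\rho$ on $\R^{2n}$ (with the dimension $2n$ absorbed into the universal constant $c$) then produces the lower bound
$$ \delta_{\rm LSI}(\nu_\rho) \,\geq\, \frac{c}{{\rm H}(\nu_\rho)}\,\min\!\Big(\tfrac{{\rm W}_{1,1}(\nu_{\rho,b},\gamma)^4}{n^2},\,\tfrac{{\rm W}_{1,1}(\nu_{\rho,b},\gamma)^2}{n}\Big), $$
which by Carlen's identity is the asserted estimate for $\delta_{\rm Wehrl}(\rho)$.

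For the characterization of the equality case $\delta_{\rm Wehrl}(\rho)=0$ under finite second moments, Carlen's identity yields $\delta_{\rm LSI}(\nu_\rho)=0$, and the equality case of the Gaussian LSI established in the paper (by combining the HWI inequality~\eqref{eq.hwi}, the Talagrand inequality~\eqref{eq.talagrand} and Theorem~\ref{thm.deficittal}, as in the argument preceding Corollary~\ref{cor.deficit1}) forces $\nu_{\rho,b}=\gamma$. Unwinding the definition of $\nu_{\rho,b}$ gives $\rho(y)=e^{-\pi|y-z_0|^2/h}$ for some $z_0\in\R^{2n}$, and a classical inversion of the coherent state transform on such minimal uncertainty Gaussian states yields the explicit form $\psi=\psi_{p_0,q_0}$.

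The main obstacle will be the bookkeeping in Carlen's identity: tracking the scaling $y=\sqrt{h^*}\,z$, the pluriharmonic factor $\tilde G$ and the two integrations by parts so that the cross terms cancel and $\Delta\log|\tilde G|^2$ drops out cleanly, leaving precisely the shift by $-n$. Once this identity is in hand, the remainder is definitional matching of the recentered measures and a direct application of Corollary~\ref{cor.deficit1}.
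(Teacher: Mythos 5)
Your reduction is the same as the paper's: establish the identity $\delta_{\rm Wehrl}(\rho) = \delta_{\rm LSI}(\nu_\rho)$, verify the barycenter relation $(\nu_\rho)_{\tilde b} = \nu_{\rho,b}$ with $\tilde b = b/\sqrt{h^*}$, invoke Corollary~\ref{cor.deficit1} on $\R^{2n}$, and then handle the equality case by forcing $\nu_{\rho,b}=\gamma$. The one place you deviate is in how the identity $\delta_{\rm LSI}(\nu_\rho)={\rm S}(\rho)-n$ is obtained: the paper computes ${\rm I}(\nu_\rho)$ directly by a change of variables and the divergence theorem, reaching $\delta_{\rm LSI}(\nu_\rho)=\frac{h}{4\pi}\int|\nabla\rho|^2/\rho\,d\mu_h + {\rm S}(\rho)-2n$, and then cites Carlen's Theorem~6 for $\int|\nabla\rho|^2/\rho\,d\mu_h=4n\pi/h$; you instead propose to re-derive this from the Bargmann--Fock pluriharmonicity of $\log f$, which is the mechanism underlying Carlen's theorem anyway. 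Two small corrections to your sketch: the Gaussian weight cancels completely in the rescaling, so $f(z)=|\tilde G(z)|^2$ (not $e^{-|z|^2/2}|\tilde G(z)|^2$); and before applying Corollary~\ref{cor.deficit1} you should note, as the paper does, that the hypothesis $\rho\not\equiv e^{-\pi|z|^2/h}$ gives ${\rm H}(\nu_\rho)>0$ via Jensen (the corollary requires positive entropy), while in the equality-case analysis the degenerate case ${\rm H}(\nu_\rho)=0$ must be dispatched separately by Jensen rather than by the LSI-equality argument.
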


\begin{proof}
Let $f_h$ be the density of $\nu_\rho$ with respect to $\gamma$ so that
$\int_{\R^{2n}} f_h  d\gamma = \int_{\R^{2n}} \rho \, d\mu_h=1$ and
$$
{\rm H} (\nu_\rho)
  \, = \, \int _{\R^{2n}} f_h \log f_h d\gamma 
   \,  = \,  \int_{\R^{2n}} \Big(\frac{\pi}{h} \rho \, |x|^2+\rho\log \rho \Big)d\mu_h.
 $$ 
Since $f_h$ is not identically $1$, the strict convexity of the
function $t \to t \log t$ implies (via Jensen) that ${\rm H}(\nu_\rho)>0$.
Since $\rho$ has finite first moment, ${\rm W}_{1,1} (\nu_{\rho,b}, \gamma ) < \infty$.
Thus, if ${\rm H}(\nu_\rho)=\infty$, there is nothing to prove, so we may assume
without loss that $\rho$ has finite second moments.  
     
A direct calculation shows that
$$ 
\frac{|\nabla f_h|^2}{f_h} \,=\,
     e^{\frac{|z|^2}{2}} \Bigg(\frac{h}{2\pi} \,
       \frac{\big |\nabla \rho \big (\sqrt{\frac{h}{2\pi}} \, z \big ) \big |^2}{\rho \big (\sqrt{\frac{h}{2\pi}}\big )}+2\sqrt{\frac{h}{2\pi}}  \, \nabla \rho \bigg (\sqrt{\frac{h}{2\pi}} \, z\bigg) \cdot z
           +\rho \bigg (\sqrt{\frac{h}{2\pi}} \, z \bigg)|z|^2 \Bigg )
 $$ 
and, by changing variables and using the divergence theorem,  
\begin{align*}
\int _{\R^{2n}}\frac{|\nabla f_h|^2}{f_h} d\gamma 
 & \, = \,  \int _{\R^{2n}} \bigg(\frac{h}{2\pi} \, \frac{|\nabla \rho|^2}{\rho}
     +2\nabla \rho  \cdot x + \frac{2\pi}{h} \, \rho \, |x|^2\bigg) d\mu_h\\
& \, = \, \int _{\R^{2n}} \bigg(\frac{h}{2\pi} \, \frac{|\nabla \rho|^2}{\rho}
     +\frac{2\pi}{h} \, \rho \, |x|^2\bigg) d\mu_h - 4n.
\end{align*}
Therefore, 
$$
\delta_{\rm LSI}(\nu_\rho) \, = \,  \frac{1}{2} \, {\rm I}(\nu_\rho) - {\rm H}(\nu_\rho)
  \, = \, \frac{h}{4\pi} \int_{\R^{2n}}  \frac{|\nabla \rho|^2}{\rho} \,d\mu_h + {\rm S}(\rho)-2n.
$$
Since $\rho=|\mathcal{L} \psi|^2$, an application of \cite[Theorem 6]{c91} yields 
$$
\int_{\R^{2n}}  \frac{|\nabla \rho|^2}{\rho} d\mu_h 
  \, = \,  4 \int_ {\R^{2n}} |\nabla \rho^{\frac{1}{2}}|^2 d\mu_h \, = \, \frac{4n\pi}{h} \, .
$$
Thus $\delta_{\rm LSI}(\nu_\rho) = {\rm S}(\rho)-n $ and Corollary \ref{cor.deficit1} implies  
$$      
{\rm S}(\rho) - n  \, \ge \,  \frac{c}{ {\rm H}(\nu_\rho)}
    \, \min \bigg ( \frac {{\rm W}_{1,1} ({\nu}_{\rho,b}, \gamma ) ^4}{4n^2} \, ,
       \frac {{\rm W}_{1,1} ({\nu}_{\rho,b}, \gamma )^2 }{ 2n} \bigg ) 
$$
where 
$$
  \nu_{\rho,b} (dz) \, = \,  f_h(z+b_h) e^{(-(b_h\cdot z+\frac{|b_h|^2}{2}))} d\gamma(z)
 \, = \, e^{|z|^2/2}\rho \bigg(\sqrt{\frac{h}{2\pi}}( z + b_h)\bigg) d\gamma(z)
$$
and $b_h $ is the barycenter of $f_h$ with respect to the Gaussian. To conclude the proof of the inequality, note that 
$$
b_h \, = \, \int_{\R^{2n}} z f_h d\gamma 
  \, = \,  \sqrt{\frac{2\pi}{h}} \int_{\R^{2n}} z \rho(z) d\mu_h  \, = \, \sqrt{\frac{2\pi}{h}} b_\rho.
$$
Next, assume that ${\rm S}(\rho)=n$. Since $\rho$ has finite second moments, ${\rm H}(\nu_\rho)<\infty$.
If ${\rm H}(\nu_\rho)=0$, Jensen's inequality ensures that $\rho$ has the desired form.
If $0<{\rm H}(\nu_\rho)<\infty$, it follows that $ \nu_{\rho , b} = \gamma$.
Thus, 
$$
e^{\frac{|z|^2}{2}} \rho \bigg(\sqrt{\frac{h}{2\pi}} \, z + b_\rho\bigg) \, = \,  1
$$
for some $b_\rho =(p_0,q_0) \in \R^{2n}$. Consequently,
$$
\rho(z) \, = \, e^{-\frac{\pi}{h}|z-b_\rho|^2} \, = \, |\mathcal{L} \Psi_{p_0,q_0}|^2
$$
and Lieb \cite{li78} has shown that the map $\psi \to |\mathcal{L} \psi|^2$ is injective.           
\end{proof}

In a similar way, one may use Corollaries~\ref{cor.stabilityw2}
and \ref{cor.tv} to obtain dimension-independent lower bounds
on the Wehrl deficit for a subclass of probability measures. For instance, Corollary~\ref{cor.stabilityw2} implies the following result.   

\begin{theorem} \label{thm.wehrl2}
Suppose $\rho=|\mathcal{L} \psi|^2$ is a probability density on $(\R^{2n}, d\mu_h)$
with barycenter $b = b_\rho \in \R^{2n}$, finite second moments, and
such 
$$
z \, \mapsto \,  e^{|z|^2/2} \rho \bigg(\sqrt{\frac{h}{2\pi}} \, z\bigg)
$$
satisfies a Poincar\'e inequality with constant $\lambda>0$.
Then
$$
\delta_{\rm Wehrl}(\rho) \, \ge \,  c_2(\lambda) \; {\rm W}_2( \nu_{\rho,b}, \gamma )^2,
$$
where $c_2(\lambda)$ is as in Corollary~\ref{cor.stabilityw2}. 
\end{theorem}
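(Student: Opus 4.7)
The plan is to imitate the proof of Theorem~\ref{thm.wehrl}, but to invoke Corollary~\ref{cor.stabilityw2} in place of Corollary~\ref{cor.deficit1}. The engine is the identification $\delta_{\rm Wehrl}(\rho) = \delta_{\rm LSI}(\nu_\rho)$ already established in the proof of Theorem~\ref{thm.wehrl} via the sharp coherent state identity $\int_{\R^{2n}} |\nabla \rho^{1/2}|^2 \, d\mu_h = n\pi/h$ from \cite{c91}. Once this link between the Wehrl deficit and the Gaussian LSI deficit is in hand, the matter reduces to a stability estimate for $\delta_{\rm LSI}$ applied to a suitable recentered measure on $\R^{2n}$.

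First I would check that $\nu_{\rho,b}$ coincides with $(\nu_\rho)_{b_h}$ in the sense of \eqref{eq.nub}, where $b_h = \sqrt{2\pi/h}\, b_\rho$ is the Gaussian barycenter of $\nu_\rho$ (already computed at the end of the proof of Theorem~\ref{thm.wehrl}). This is a direct density calculation: substituting $b_h$ into \eqref{eq.nub}, the exponential correction combines with the shifted $|z+b_h|^2/2$ inside $f_h(z+b_h)$ to produce precisely the density defining $\nu_{\rho,b}$. Consequently $\nu_{\rho,b}$ has mean zero and, by the invariance of the LSI deficit under the recentering operation noted after Corollary~\ref{cor.stabilityw2centered}, $\delta_{\rm LSI}(\nu_{\rho,b}) = \delta_{\rm LSI}(\nu_\rho) = \delta_{\rm Wehrl}(\rho)$.

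Next I would observe that $\nu_{\rho,b} \in \mathcal{P}(\lambda)$. The hypothesis is naturally interpreted as saying that $\nu_\rho$, whose density with respect to $\gamma$ is exactly $z \mapsto e^{|z|^2/2} \rho(\sqrt{h/2\pi}\, z)$, satisfies \eqref{eq.poincare} with constant $\lambda$. Since $\nu_{\rho,b}$ is simply the pushforward of $\nu_\rho$ under the translation $z \mapsto z - b_h$, and the Poincar\'e inequality is invariant under translations of the underlying measure (both sides of \eqref{eq.poincare} are unchanged by a coordinate shift applied to both $g$ and the measure), the translated law inherits the Poincar\'e constant $\lambda$.

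With these two inputs, Corollary~\ref{cor.stabilityw2} applied to the centered measure $\nu_{\rho,b} \in \mathcal{P}(\lambda)$ immediately yields $\delta_{\rm Wehrl}(\rho) = \delta_{\rm LSI}(\nu_{\rho,b}) \geq c_2(\lambda)\, {\rm W}_2(\nu_{\rho,b}, \gamma)^2$, which is the claimed bound. I do not expect any serious obstacle here: the only mildly delicate step is the notational bookkeeping in the first one, reconciling the scalings between $\mu_h$ and $\gamma$ and matching the definition of $\nu_{\rho,b}$ to the recentering \eqref{eq.nub}. Everything else is a direct transfer of the Poincar\'e hypothesis along a translation and a black-box appeal to Corollary~\ref{cor.stabilityw2}.
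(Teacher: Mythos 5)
Your proposal is correct and follows exactly the route the paper indicates (the paper states the result as a corollary of the Theorem~\ref{thm.wehrl} computation together with Corollary~\ref{cor.stabilityw2}): you reuse the identity $\delta_{\rm Wehrl}(\rho)=\delta_{\rm LSI}(\nu_\rho)$ coming from Carlen's coherent-state Fisher-information identity, recognize $\nu_{\rho,b}$ as the recentering $(\nu_\rho)_{b_h}$ of \eqref{eq.nub} with $b_h=\sqrt{2\pi/h}\,b_\rho$, note translation invariance of the Poincar\'e constant so $\nu_{\rho,b}\in\mathcal{P}(\lambda)$, and apply Corollary~\ref{cor.stabilityw2} to the centered measure $\nu_{\rho,b}$. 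All of the delicate bookkeeping (matching $\nu_{\rho,b}$ to \eqref{eq.nub} and the scaling between $\mu_h$ and $\gamma$) is handled correctly.
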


As an example of illustration, for $M>0$, let 
$$
\rho(z) \in \mathcal{F}_M \, = \,  \{e^{-\psi(z)}: {\rm Hess} (\psi) \ge M\}.
$$ 
Set
$$
f_h(z) \, = \, e^{|z|^2/2}\rho \bigg(\sqrt{\frac{h}{2\pi}} \, z\bigg)
$$ 
and note that 
$$
-{\rm Hess}\big ( \log(f_h) \big)
    \, = \,  \frac{h}{2\pi} \, {\rm Hess} (\psi) \bigg (\sqrt{\frac{h}{2\pi}} \, z \bigg) - {\rm Id}
   \,  \ge \,  \frac{Mh}{2\pi} - {\rm Id}.
$$ 
Thus, if $M>\frac{3\pi}{h}$, the previous theorem applies in $\mathcal{F}_M$.

\medskip

It is well known that the range of $\mathcal{L}$ is closely related to the space
${\cal A}^2$ of entire function $\Phi$ on $\mathcal{C}^n$ such that
$$
\int |\Phi(z)|^2 e^{-2\pi|z|^2/h} dp \, dq \, < \,  \infty
$$
where $z=(q+ip)/\sqrt{2}$. The precise statement is that for every
$\psi \in ({\rm L}^2(\R^n), dx)$, 
$$
\mathcal{L}\psi(p,q) \, = \, e^{ip\cdot q/2h^*} \Phi((q-ip)/\sqrt{2})e^{(p^2+q^2)/4h^*}
$$ where $\Phi \in {\cal A}^2$. In fact, Segal \cite{s62, s63} (see also \cite{s70})
proved that the map
$\mathcal{\widetilde L}: \psi \to \Phi$ is unitary from
$(L^2(\R^n), dx)$ onto ${\cal A}^2$, and therefore Carlen \cite{c91} calls
$\mathcal{\widetilde L}$ the Segal transform. With this in mind, the Segal transform may be useful in characterizing the subspace of functions $\psi$ in the domain of $\mathcal{L}$ mapping to functions
$|\mathcal{L} \Psi|^2$ admitting a Poincar\'e inequality and hence a
dimensionless $W_2$-estimate via Theorem~\ref {thm.wehrl2}.

\bigskip
\medskip

%
%

\end{document}